%% file: main.tex
\documentclass[hidelinks,onefignum,onetabnum]{siamart251104}

\usepackage{amsmath,amssymb,bbm,todonotes}

\input{shared}
\usepackage{tikz}
\usetikzlibrary{arrows.meta,positioning,calc,backgrounds}
\usepackage{pgfplots}
\pgfplotsset{compat=1.18}
\usepackage{placeins}

\newcount\mstylenum 
\def\varstyle#1{\mathchoice{\mstylenum=0 #1}{\mstylenum=1 #1}{\mstylenum=2 #1}{\mstylenum=3 #1}}
\def\usestyle{\ifcase\mstylenum \displaystyle\or\textstyle\or\scriptstyle\or\scriptscriptstyle\fi}
\def\gensim#1{\mathrel{\varstyle{\vtop{\offinterlineskip
   \halign{\hfil$\usestyle##$\hfil\cr#1\cr\noalign{\kern1pt}\sim\cr}}}}}

\ifpdf
\hypersetup{
  pdftitle={Hybrid Digital--Analog Preconditioning},
  pdfauthor={S. Shah, R. Li, T. Gokmen, V. Kalantzis, L. Horesh, and Y. Xi}
}
\fi

\begin{document}

\maketitle

\begin{abstract}
Analog in-memory computing enables highly parallel matrix--vector
multiplications with reduced data movement, but the resulting operations are
noisy, quantized, and affected by device- and circuit-level non-idealities.
This paper studies approximate inverse preconditioning for Krylov subspace
methods in a hybrid digital--analog setting. The digital host performs sparse
products with the coefficient matrix and the precision-sensitive Krylov
operations, while preconditioner applications are performed through analog
crossbar matrix--vector multiplications. Since the realized preconditioner is
inexact and application-dependent, the outer iteration is formulated as the
flexible GMRES method. 
We show that analog execution changes the usual preconditioner design problem in the sense that 
a stronger digital
preconditioner may be less effective after analog scaling, write noise, input/output perturbations, quantization, and clipping are taken into account. 
We compare 
various block Jacobi preconditioning schemes including
exact block inverses, sparse approximate inverses, Monte Carlo
approximate inverses (MCAI), damping, and nested block Jacobi
schemes. Numerical experiments with realistic analog matrix--vector simulations show
that analog-aware choices of block size, damping, MCAI construction accuracy,
and nesting are important for robust convergence.
\end{abstract}

\begin{keywords}
Analog in-memory computing, flexible GMRES, block Jacobi preconditioning, Monte Carlo approximate inverses, inexact computations
\end{keywords}

\begin{MSCcodes}
65F08, 65F10, 65Y10, 65C05
\end{MSCcodes}

\section{Introduction}
\label{sec:intro}

Large sparse linear systems are central to scientific computing, arising in the
numerical solution of partial differential equations (PDEs), optimization, inverse
problems, uncertainty quantification, and multiphysics simulations
\cite{benzi2005numerical,krylovinverse,nagyimagedeblurring,kalantzisUQ,keyesmultiphysics,saad20Creview,10.1007/s10915-022-01836-5,nonlinearopt}. Krylov subspace methods
are standard solvers for such systems because they exploit sparsity and avoid factorizations. 
Their practical performance, however, depends critically
on preconditioning. A useful preconditioner must improve convergence while
remaining inexpensive to apply and compatible with the target computing
architecture \cite{saaditerativebook}. Designing preconditioners that satisfy
these competing requirements remains a fundamental challenge in large-scale
scientific computing.

The growing cost of data movement further complicates preconditioner design.
On conventional von Neumann architectures, arithmetic throughput has grown
faster than the ability to move data between memory and processing units, so
memory traffic can dominate sparse and block-structured linear algebra \cite{roofline,memorywall}. Analog
in-memory computing (AIMC) and related compute-in-memory paradigms address this
bottleneck by storing matrix entries directly in memory devices and performing
matrix--vector multiplications (MVMs) through the physical dynamics of the
hardware \cite{analogfrontier}. In resistive crossbar arrays, matrix entries are encoded as
conductances, input vector components are applied as voltages or pulses, and
output currents accumulate dot products through Ohm's and Kirchhoff's laws \cite{memristorMAC}.
This makes AIMC especially relevant for algorithms in which repeated dense or
block-dense MVMs are a dominant computational component
\cite{ambrogio2023analogai,legallo2023aimcchip,wan2022rramcim}.

Recent chip-scale AIMC demonstrations, driven largely by neural network
inference, have shown that analog crossbar systems can perform many MVMs with
high parallelism and potentially favorable energy behavior
\cite{ambrogio2023analogai,legallo2023aimcchip}. These benefits, however, come
with an arithmetic model that differs from conventional floating-point
computation. Analog MVMs are affected by programming variability, temporal
drift, read noise, clipping, parasitic effects, and limited analog-to-digital conversion (ADC) and digital-to-analog conversion (DAC) precision
\cite{rasch2023hardwareaware}. The computed product is therefore not an exact
MVM, but an approximate, noisy, and
limited-precision operation. This viewpoint connects AIMC to a broader theme in 
scientific computing:  the
design of iterative methods that exploit low-precision, mixed-precision, or
otherwise inexact arithmetic while preserving solver reliability
\cite{carson2018new,graillat2024adaptive,higham2021survey,mixedsvd}. 

Given this tradeoff, the central algorithmic issue is not whether analog MVMs
can reproduce the digital counterparts exactly, but which parts of a solver can
benefit from analog execution without compromising reliability. Recent studies have considered resistive and memristive crossbar arrays as
accelerators for selected numerical linear algebra tasks, including matrix
inversion, matrix equation solving, and iterative solver kernels
\cite{bomananalog,pan2024blockamc,wan2022rramcim,zuo2025matrixeq}. These studies suggest
that analog hardware is most naturally used for dense or block-dense MVMs,
whereas irregular operations, sparse data access, reductions, and
precision-sensitive computations are better retained on a digital processor.
This separation motivates a hybrid digital--analog solver architecture. Within
the analog part of such an architecture, block-based decompositions provide a
practical way to map large numerical problems onto finite crossbar arrays and
to improve robustness to device and circuit non-idealities
\cite{pan2024blockamc}.

Preconditioned Krylov methods provide a concrete instance of this selective
use of analog computation. Each iteration naturally separates applications of
the coefficient matrix from applications of the preconditioner, together with
vector updates, inner products, residual computations, and orthogonalization.
The sparse MVM with the original matrix and the reduction-based operations
are irregular and precision-sensitive, and are therefore retained on the
digital host. The preconditioner action, however, can be offloaded to analog
hardware when it can be expressed primarily through MVMs. This requirement
excludes many classical choices such as incomplete-factorization preconditioners, for
example, which rely on triangular solves with sequential data dependencies and are
not naturally matched to crossbar arrays \cite{gmslr}. By contrast, approximate inverse
preconditioners can be applied
through block MVMs, making them natural candidates for analog acceleration.

In this paper we study block approximate inverse
preconditioning for flexible GMRES (FGMRES) \cite{saad1993flexible} in a hybrid digital--analog setting. We use this setting to
identify how block size, inverse construction, damping, and nesting should be
chosen when exact-arithmetic preconditioning strength must be balanced against
analog robustness. The main contributions of this paper are as follows.

\begin{enumerate}
    \item We develop an analog-aware formulation of block Jacobi approximate
    inverse preconditioning. Starting from a hybrid digital--analog MVM model,
    we show that an analog preconditioner application should be viewed as an
    input- and application-dependent perturbed block action, rather
    than as the application of a deterministic matrix. This leads to a different
    preconditioner design criterion which takes into consideration that the 
    block size, density, inverse accuracy, and
    scaling must be selected by balancing exact-arithmetic preconditioning
    strength against analog write noise, input/output perturbations,
    quantization, clipping, and dynamic range constraints.

    \item We adapt Monte Carlo approximate inverse (MCAI) preconditioners based on
    Neumann series \cite{regenerative} to the analog setting. Exact block inverses and
    sparse approximate inverses are used as reference constructions, while MCAI is
    emphasized as an analog-suitable construction because its accuracy is tunable
    and its setup consists of highly parallelizable random walks. We provide a
    noise-matching error bound that explains how the number of walks and the walk
    length should be chosen so that inverse-construction error is commensurate with
    the analog realization error rather than machine precision.\footnote{This is similar to how numerical solutions of PDEs need to be accurate only up to the discretization error.}

    \item We introduce robustness mechanisms for analog block preconditioning,
    including damping, shifting, and nested block solves. Damping moderates
    unreliable analog inverse actions, while nesting reduces the size of the
    matrices programmed onto crossbar arrays and thereby reduces analog
    exposure. These mechanisms are designed specifically around the
    perturbation structure of analog MVMs.

    \item We present numerical experiments with realistic analog MVM
    simulations on Poisson and convection--diffusion--reaction problems. The
    results show non-monotonic block-size behavior, robustness gains from modest
    damping, saturation of MCAI construction accuracy once the analog noise floor is reached, and nested analog preconditioning that closely matches
    the iteration counts of corresponding digital block Jacobi
    preconditioners on larger Poisson problems.
\end{enumerate}

The remaining sections are organized as follows.
Section~\ref{sec:analog_model} introduces the hybrid digital--analog MVM model,
including the crossbar abstraction, hybrid solver architecture, and analog
noise and quantization model. Section~\ref{sec:alg} develops the proposed
analog-aware block approximate inverse preconditioners. Section~\ref{sec:experiments} presents
numerical experiments that evaluate the resulting preconditioners on model PDE
problems. Section~\ref{sec:conc} concludes with limitations and directions for
future work.

\section{Hybrid Digital--Analog MVM Model}
\label{sec:analog_model}

This section defines the analog MVM abstraction
used throughout the paper. The model is not intended to capture every
device- and circuit-level effect in a resistive crossbar array. Instead, it
captures the features most relevant to Krylov preconditioning, i.e., analog MVMs are
highly parallel but noisy. 

\subsection{Noiseless crossbar MVM}
\label{sec:crossbar_mvm}

A resistive crossbar array is a 2D grid of programmable memory
devices located at the intersections of horizontal word lines and vertical bit
lines. Each device stores a programmable conductance. In the MVM mode, the entries
of an input vector are encoded as voltages or pulses and applied along the word
lines. The resulting currents are accumulated along the bit lines according to
Kirchhoff's current law and are then sensed by peripheral circuitry \cite{sensing}.

We use the following convention throughout the paper. The input component
\(x_j\) is applied to word line \(j\), and the output component
\(\widetilde y_i\) is collected from bit line \(i\) after sensing and
digitization. 
Let 
$G^{\rm phys} \in \mathbb{R}^{n_w \times n_b}$ 
where
$n_w$ denotes the number of word lines,
$n_b$ the number of bit lines, and
\(G^{\rm phys}_{ji} \in \mathbb{R}\)  the conductance at word line
\(j\) and bit line \(i\). 
When applying the voltage vector \(v\) to the word
lines, the ideal bit-line current is
\begin{equation}
    f_i = \sum_j G^{\rm phys}_{ji} v_j ,
\end{equation}
or equivalently, in the MVM form,
\begin{equation}
\label{eq:f=GTv}
    f=(G^{\rm phys})^T v, \quad 
    f\in \mathbb{R}^{n_b}.
\end{equation}

\begin{figure}[t]
\centering
\resizebox{0.65\linewidth}{!}{%
\begin{tikzpicture}[
  font=\small,
  >=Latex,
  line cap=round,
  line join=round
]
\definecolor{cBorder}{RGB}{40,55,75}
\definecolor{cWL}{RGB}{62,110,165}
\definecolor{cBL}{RGB}{32,140,95}
\definecolor{cCell}{RGB}{240,185,70}
\definecolor{cCellEdge}{RGB}{160,115,30}
\definecolor{cBoxA}{RGB}{230,242,255}
\definecolor{cBoxB}{RGB}{232,255,244}

\def\nrows{7}
\def\ncols{7}
\def\dx{1.00}
\def\dy{0.75}

\foreach \r in {0,...,\numexpr\nrows-1\relax} {
  \draw[line width=1.1pt, draw=cWL]
    (0,{\r*\dy}) -- ({(\ncols-1)*\dx},{\r*\dy});
  \node[anchor=west, text=cWL, fill=white, inner sep=1pt]
    at ({(\ncols-1)*\dx + 0.35},{\r*\dy}) {$\mathrm{WL}_{\r}$};
}

\foreach \c in {0,...,\numexpr\ncols-1\relax} {
  \draw[line width=1.1pt, draw=cBL]
    ({\c*\dx},0) -- ({\c*\dx},{(\nrows-1)*\dy});
  \node[anchor=south, text=cBL]
    at ({\c*\dx},{(\nrows-1)*\dy+0.35}) {$\mathrm{BL}_{\c}$};
}

\foreach \r in {0,...,\numexpr\nrows-1\relax} {
  \foreach \c in {0,...,\numexpr\ncols-1\relax} {
    \filldraw[fill=cCell, draw=cCellEdge, line width=0.6pt]
      ({\c*\dx},{\r*\dy}) circle (0.10);
    \fill[white, opacity=0.35]
      ({\c*\dx-0.04},{\r*\dy+0.04}) circle (0.045);
  }
}

\node[
  draw=cWL, fill=cBoxA, line width=1.0pt,
  rounded corners=4pt,
  minimum width=2.35cm, minimum height=1.05cm,
  align=center
] (dac) at (-3.1,{0.5*(\nrows-1)*\dy})
{\textbf{DAC}\\[1pt]\small $v=\alpha \odot x$};

\foreach \r in {0,...,\numexpr\nrows-1\relax} {
  \draw[->, draw=cWL, line width=0.9pt, dashed]
    (dac.east) -- (-0.75,{\r*\dy}) -- (0,{\r*\dy});
}

\node[
  draw=cBL, fill=cBoxB, line width=1.0pt,
  rounded corners=4pt,
  minimum width=3.4cm, minimum height=1.05cm,
  align=center
] (adc) at ({0.5*(\ncols-1)*\dx},-2.0)
{\textbf{Sense + ADC}\\[2pt]\small
$f=(G^{\rm phys})^T v,\quad \widetilde y=\beta \odot f$};

\foreach \c in {0,...,\numexpr\ncols-1\relax} {
  \draw[->, draw=cBL, line width=0.9pt]
    ({\c*\dx},0) -- ({\c*\dx},-1.05) -- (adc.north);
}

\end{tikzpicture}
}
\caption{A \(7\times 7\) analog crossbar MVM. The DAC maps the digital input
to word-line signals \(v=\alpha\odot x\); the crossbar accumulates bit-line
currents \(f=(G^{\rm phys})^T v\); and the sense/ADC circuitry returns
\(\widetilde y=\beta\odot f\).}
\label{fig:crossbar-7x7}
\end{figure}

Figure~\ref{fig:crossbar-7x7} illustrates this signal path. The digital input
vector \(x\) is converted by DAC circuitry into an analog word-line signal
\begin{equation}
    v = \alpha \odot x ,
\end{equation}
where \(\alpha\) is a vector of input scaling factors and \(\odot\) denotes
componentwise multiplication. The crossbar produces
$f$ as in \eqref{eq:f=GTv}
and the sensing and ADC circuitry returns the digital output
\begin{equation}
    \widetilde y = \beta \odot f ,
\end{equation}
where \(\beta\) is a vector of output scaling factors. 
Thus, the physical cell at word line
\(j\) and bit line \(i\) stores the contribution of input component \(x_j\) to
output component \(y_i\), respectively,
up to input and output scaling.
Combining these stages, it follows that
\begin{equation}
    \label{eq:idealmap}
    \widetilde y
    =
    \beta \odot \left((G^{\rm phys})^T(\alpha\odot x)\right)
    =
    D_\beta (G^{\rm phys})^T D_\alpha x ,
\end{equation}
where \(D_\alpha=\operatorname{diag}(\alpha)\) and
\(D_\beta=\operatorname{diag}(\beta)\)
denote the diagonal matrices with $\alpha$ and $\beta$ on their diagonals, respectively.

Because physical conductances are nonnegative, signed matrix entries are
typically represented using differential pairs. One logical weight is then
represented by two physical conductances, 
namely,
\(G^{+,\rm phys}_{ji}\) and
\(G^{-,\rm phys}_{ji}\), with effective signed conductance proportional to
\begin{equation}
G^{\rm phys}_{ji} \propto~
G^{+,\rm phys}_{ji} - G^{-,\rm phys}_{ji}.
\end{equation}
Thus, each drawn cell in Figure~\ref{fig:crossbar-7x7} should be interpreted as
one logical matrix entry; in a signed implementation, that entry may be realized
by a differential pair.
For a fixed crossbar tile and fixed peripheral precision, array-level MVM
latency is
\emph{
largely independent of the logical density of the stored matrix
block}.
This makes crossbars attractive for dense or block-dense approximate
inverse preconditioners whose application can be expressed primarily as
MVMs. 

\subsection{Hybrid preconditioning architecture}
\label{sec:hybrid_architecture}

\begin{figure}[t]
    \centering
    \includegraphics[width=0.92\linewidth]{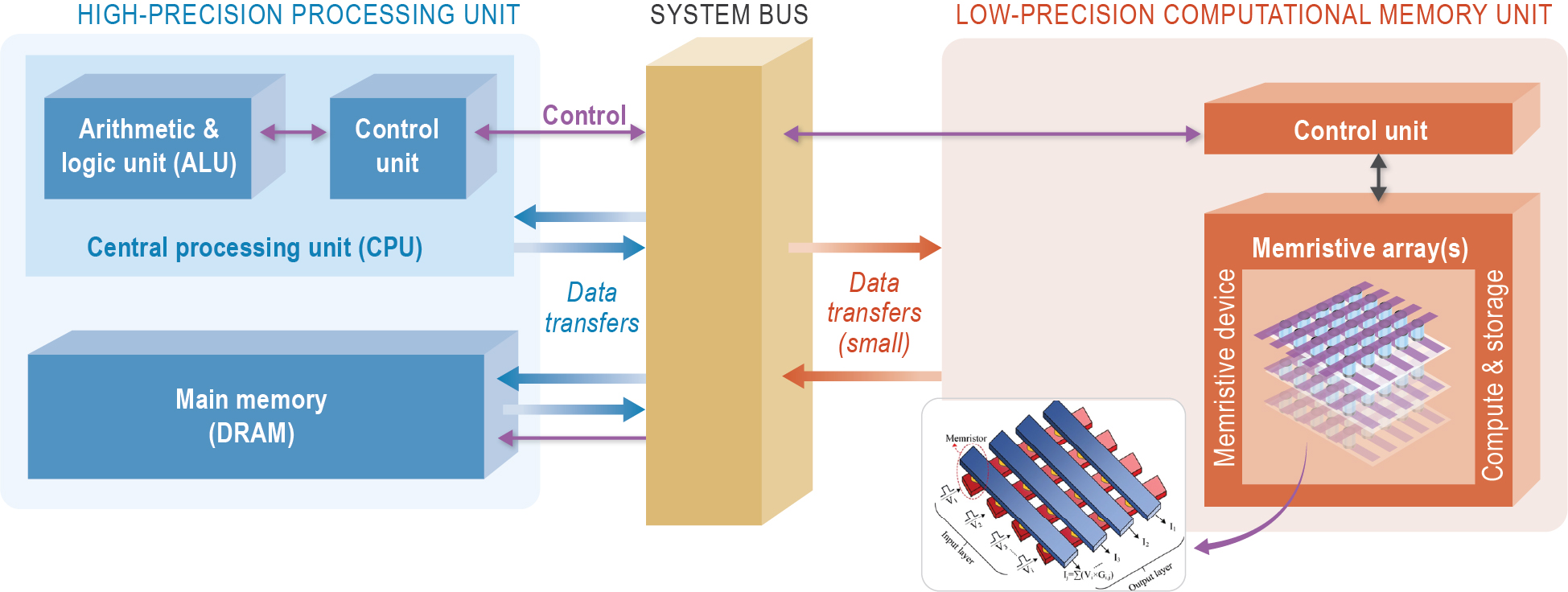}
    \caption{Hybrid digital--analog solver architecture. The digital host
    manages the Krylov iteration and sparse operations with \(A\), while the
    computational memory unit stores block approximate inverse preconditioners
    and applies them through approximate analog MVMs.}
    \label{fig:hyb}
\end{figure}
The hybrid architecture used in this work is shown schematically in
Figure~\ref{fig:hyb}. The high-precision digital host stores the sparse
coefficient matrix \(A\), maintains the Krylov basis, and performs the
operations for which sparsity, reductions, and high precision are important. 
These operations include 
sparse MVMs with \(A\), residual computations, inner
products, vector updates, orthogonalization, and 
solving the small least-squares
problem in FGMRES. The low-precision computational memory unit
contains memristive crossbar arrays and is used for 
preconditioner applications dominated by MVM.

Once a preconditioner block has been programmed into the array,
repeated applications require 
only
vector transfers through the system bus, while
the preconditioner matrix remains stored in the computational memory unit. The
specific block Jacobi preconditioners used in this architecture are introduced
in Section~\ref{sec:alg}.

\subsection{Analog MVM noise and quantization model}
\label{sec:analog_noise_model}

Section~\ref{sec:crossbar_mvm} described the ideal crossbar MVM
\[
    x \ \mapsto \ v=\alpha\odot x
    \ \mapsto \ f=(G^{\rm phys})^T v
    \ \mapsto \ \widetilde y=\beta\odot f,
\]
which, after scaling and programming, realizes the logical operation \(y=Mx\).
Actual analog MVMs are not exact realizations of this ideal map. Device
variability, finite conductance resolution, input/output conversion, limited
dynamic range, clipping, and readout noise perturb the computed product. These
non-idealities are central to analog preconditioner design, and can sometimes be 
counterintuitive. For example, a block inverse
that is strong in digital
arithmetic may be less effective once the programmed
matrix and each MVM application are perturbed.

In this subsection, \(M\) denotes the logical matrix to be applied by one
analog MVM. Before programming, \(M\) is scaled by a factor \(\mu>0\) so that the entries of
\(\mu^{-1}M\) fall within 
the conductance range representable by the crossbar. The ADC
and digital post-processing then return the result in the original logical
scaling. With this convention, the non-ideal analog MVM is modeled as
\begin{equation}
\label{eq:analog_mvm_full}
    \widetilde y
    =
    \mu \operatorname{ADC}
    \left(
    \left[
        \left(\mu^{-1}M\right)\odot
        \left(\mathbf{1}+N^{(M)}\right)
        +N^{(W)}
    \right]
    \left(
        \operatorname{DAC}(x)+n^{(I)}
    \right)
    +n^{(O)}
    \right),
\end{equation}
were \(N^{(M)}\) represents multiplicative conductance-dependent perturbations,
\(N^{(W)}\) represents additive write perturbations in the scaled programmed
matrix, and \(\mathbf{1}\) is the all-ones matrix. The vectors \(n^{(I)}\) and
\(n^{(O)}\) represent input-referred and output-referred noise, respectively.
The \(\operatorname{DAC}\) model includes input scaling and finite precision, while
the \(\operatorname{ADC}\) model includes output quantization, finite dynamic
range, clipping, and dequantization. The initial matrix scaling is undone after the ADC, rescaling the output vector to logical coordinates. 
Thus,
\eqref{eq:analog_mvm_full} is the noisy counterpart of the ideal signal path in
Figure~\ref{fig:crossbar-7x7}, with the additional fact that the logical
matrix \(M\) is programmed as \(\mu^{-1}M\) and the output vector is rescaled after readout.
Figure~\ref{fig:noisemodel} summarizes these perturbation sources and their
locations in the analog MVM pipeline.
\begin{figure}[t]
    \centering
    \includegraphics[width=0.95\linewidth]{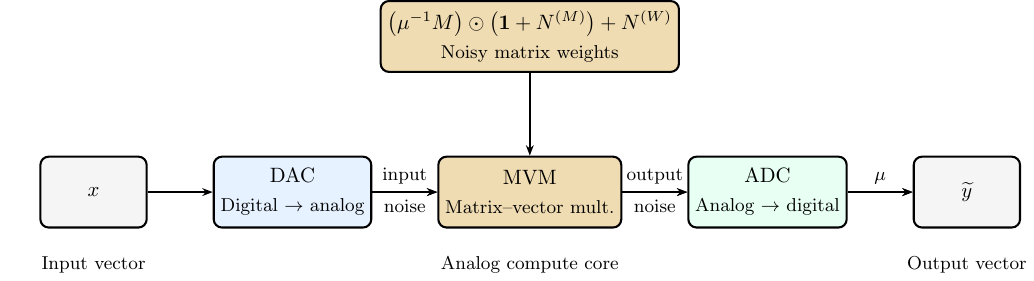}
\caption{Noise and quantization sources in a single analog MVM,
\(\widetilde y\approx Mx\). The logical matrix \(M\) is first scaled and
programmed as \(M/\mu\). The scaled programmed matrix is affected by
multiplicative perturbations \(N^{(M)}\) and additive write perturbations
\(N^{(W)}\); the input and output paths are affected by DAC/ADC quantization,
finite dynamic range, and input/output noise. The output is rescaled by
\(\mu\) in logical coordinates.}
    \label{fig:noisemodel}
\end{figure}

Equivalently, in programmed coordinates, a matrix entry is represented as
\begin{equation}
    \widetilde M^{\rm prog}_{ij}
    =
    {\mu^{-1}}M_{ij}(1+\epsilon_{ij})+\zeta_{ij},
\end{equation}
where \(\epsilon_{ij}\) is a relative perturbation and \(\zeta_{ij}\) is an
absolute write perturbation. After rescaling by \(\mu\), the corresponding
logical-coordinate entry is
\begin{equation}
    \widetilde M^{\rm log}_{ij}
    =
    M_{ij}(1+\epsilon_{ij})+\mu\zeta_{ij}.
\end{equation}
Thus, additive write noise in the programmed conductance domain is multiplied by
the matrix scaling factor when viewed in logical coordinates. The additive write noise is applied to all matrix entries, even zero entries unless they are explicitly masked; moreover, small entries may be overwhelmed by this absolute perturbation. In contrast, multiplicative noise is applied proportionally to each entry, so small entries receive proportionally small perturbations and zero entries are left unchanged, preserving the sparsity pattern of the matrix. 

Throughout this paper we set \(N^{(M)}=0\) and use additive write noise,
input noise, output noise, and ADC/DAC quantization as the dominant
perturbation sources. The write perturbation \(N^{(W)}\) is sampled once when a
matrix block is programmed, whereas \(n^{(I)}\) and \(n^{(O)}\) are sampled at
each MVM application.

The DAC and ADC are modeled as finite-bit quantizers with finite dynamic
range. Before DAC quantization, the input vector is scaled so that its entries
lie in \([-1,1]\), and each scaled entry is mapped to the nearest of
\(2^{\ell_{\rm DAC}}-1\) uniformly-spaced levels. The analog output is
quantized by the ADC over its prescribed dynamic range, after which the input vector and matrix scalings are undone. The typical simulator parameters
are listed in Table~\ref{tab:noisevalues}.

\begin{table}[t]
\centering
\caption{Typical analog simulator parameters. The matrix scaling factor
\(\mu\) is chosen per programmed block so that \(\mu^{-1}M\) lies within the
representable conductance range.}\label{tab:noisevalues}
\begin{tabular}{c|c|c}
\textbf{Parameter} & \textbf{Symbol} & \textbf{Value} \\ \hline
Additive write noise standard deviation & \(\sigma_W\) & \(5\cdot 10^{-3}\) \\ 
Input noise standard deviation & \(\sigma_I\) & \(1\cdot 10^{-2}\) \\ 
Output noise standard deviation & \(\sigma_O\) & \(1\cdot 10^{-2}\) \\ 
Matrix scaling factor & \(\mu\) & \(\max_{i,j}|M_{ij}|/0.6\) \\ 
Number of DAC bits & \(\ell_{\rm DAC}\) & \(7\) \\ 
Number of ADC bits & \(\ell_{\rm ADC}\) & \(9\) \\ \hline
\end{tabular}
\end{table}

For analysis, we collect DAC quantization and input noise into an effective
input perturbation \(e_I(x)\), and we absorb output noise, ADC quantization,
and clipping into an output perturbation
\(e_O(x)\). With \(N^{(M)}=0\), the analog MVM can be written as
\begin{equation}
\label{eq:analog_mvm_simplified}
    \widetilde y
    =
    \left(M+\mu N^{(W)}\right)\left(x+e_I(x)\right)+\mu e_O(x).
\end{equation}
Expanding yields
\begin{equation}
\label{eq:analog_mvm_error_decomposition}
    \widetilde y
    =
    Mx
    + \mu N^{(W)}x
    + M e_I(x)
    + \mu N^{(W)}e_I(x)
    + \mu e_O(x).
\end{equation}
Thus, an analog MVM can be written abstractly as
\begin{equation}
\label{eq:analog_operator_abstraction}
    \widetilde y = Mx+\delta_M(x),
\end{equation}
where \(\delta_M(x)\) depends on the programmed matrix, the input vector, the
quantization and clipping rules, and the random noise sampled during the MVM.
The perturbation is therefore generally input-dependent and
application-dependent. Since \(e_I(x)\) and \(e_O(x)\) include quantization and
possible clipping, they need not be Gaussian or unbiased even if the underlying
input and output noise terms are modeled as mean-zero random variables.

\section{Analog-Aware Block Approximate Inverse Preconditioners}
\label{sec:alg}

The analog MVM model in Section~\ref{sec:analog_model} changes the design
criteria for block Jacobi preconditioning. In exact arithmetic, a block Jacobi
preconditioner is usually judged by how well the diagonal block inverses
approximate the action of the inverse of the coefficient matrix. In the analog
setting, however, each block action is realized through a noisy, quantized, and
input-dependent MVM. 
This section compares several block inverse constructions from this
analog-aware viewpoint. Exact block inverses and sparse approximate inverses
serve as reference cases, while MCAI provides a tunable construction whose
truncation and sampling errors can be matched to the analog noise floor. We
also discuss damping and nested block solves as 
mechanisms for improving robustness.

\subsection{Inexact analog block Jacobi}
\label{sec:block_jacobi}

Let \(A=[A_{ij}]_{i,j=1}^p\) be a \(p\times p\) block partition of
\(A\in\mathbb{R}^{n\times n}\). The block Jacobi approximate inverse
preconditioner has the form
\begin{equation}
\label{eq:block_jacobi_M}
    M=\operatorname{blkdiag}(M_{11},\ldots,M_{pp}),
    \quad \mbox{with} \quad
    M_{kk}\approx A_{kk}^{-1}.
\end{equation}
For a conformally partitioned vector \(r^T=(r_1^T,\ldots,r_p^T)\), the ideal action of $M$ is
\begin{equation}
\label{eq:block_jacobi_action}
    Mr =
    \begin{bmatrix}
        M_{11}r_1\\
        \vdots\\
        M_{pp}r_p
    \end{bmatrix}.
\end{equation}
In the analog implementation, each block product is performed by an analog
MVM. Using the perturbation model of Section~\ref{sec:analog_noise_model}, the
realized block action is
\begin{equation}
\label{eq:analog_block_action}
    \widetilde z_k
    =
    M_{kk}r_k+\delta_{M_{kk}}(r_k),
\end{equation}
and hence
\begin{equation}
\label{eq:analog_block_jacobi_action}
    \widetilde z
    =
    Mr+\delta_M(r).
\end{equation}
The perturbation \(\delta_M(r)\) is blockwise, input-dependent, and may vary
from one preconditioner application to the next.

For block $k$, let \(\mu_k\) be the scaling factor used to program
\(\mu_k^{-1}M_{kk}\). 
From \eqref{eq:analog_mvm_simplified}, we can write
the simplified analog block action as
\begin{align}
\label{eq:block_error_expansion}
    \widetilde z_k
    =
    \left(M_{kk}+\mu_k N_k^{(W)}\right)
    \left(r_k+e_I(r_k)\right)
    +\mu_k e_O(r_k) 
    =M_{kk}r_k + \delta_{M_{kk}}(r_k),
\end{align}
in which
\begin{equation}
\label{eq:deltaMkk}
    \delta_{M_{kk}}(r_k)
    =
    \mu_k N_k^{(W)}r_k
    +M_{kk}e_I(r_k)
    +\mu_k N_k^{(W)}e_I(r_k)
    +\mu_k e_O(r_k).
\end{equation}
Thus, improving \(M_{kk}\approx A_{kk}^{-1}\) in exact arithmetic is only one
part of the analog preconditioning design. 
The realized action also depends on how the chosen
block inverse amplifies write noise, input perturbations, output
perturbations, quantization, clipping, and dynamic range effects.

The additive write noise is applied to the scaled programmed block
\(\mu_k^{-1}M_{kk}\), but appears as \(\mu_k N_k^{(W)}\) in logical coordinates
after rescaling. This scaling affects the absolute perturbation introduced by
the analog write process. If
\(N_k^{(W)}\in\mathbb{R}^{n_k\times n_k}\) has independent mean-zero entries
with common variance \(\sigma_W^2\), and \(r_k\) is fixed with respect to the
write noise, then
\begin{equation}
\label{eq:write_noise_block_scaling}
    \mathbb{E}\!\left[
    \|\mu_k N_k^{(W)}r_k\|_2^2 \,\middle|\, r_k
    \right]
    =
    n_k\mu_k^2\sigma_W^2\|r_k\|_2^2 ,
\end{equation}
where \(n_k\) is the block size. 

Equivalently, the input-normalized 
root mean square (RMS) write-noise contribution is
\[
    \frac{
    \sqrt{\mathbb{E}\!\left[
    \|\mu_k N_k^{(W)}r_k\|_2^2 \,\middle|\, r_k
    \right]}
    }{\|r_k\|_2}
    =
    \sqrt{n_k}\,\mu_k\sigma_W .
\]
Thus, per unit input norm, the absolute write-noise contribution grows like
\(\sqrt{n_k}\mu_k\). The RMS write perturbation relative to the
ideal preconditioned vector is
\[
    \eta_k^{(W)}(r_k)
    :=
    \frac{
    \sqrt{\mathbb{E}\!\left[
    \|\mu_k N_k^{(W)}r_k\|_2^2 \,\middle|\, r_k
    \right]}
    }{\|M_{kk}r_k\|_2}
    \geq 
    \frac{\sqrt{n_k}\,\mu_k\sigma_W}{\|M_{kk}\|_2} .
\]

Consequently, the relative write perturbation increases with block size if the
maximum singular value of $M_{kk}$ does not grow at a comparable rate to
\(\sqrt{n_k}\mu_k\). 
More generally, the relative effect depends on the
interaction between the scaling factor \(\mu_k\), the block size \(n_k\), and
the action of \(M_{kk}\). This provides a
mechanism by which larger blocks can become more sensitive to analog write
perturbations, even though they may improve the exact-arithmetic block Jacobi
approximation.

This estimate does not imply that the relative analog
error must increase monotonically with block size. Rather, it shows that
larger inverse blocks can increase accumulated perturbation energy,
while the relative effect depends on the block inverse, the scaling factor \(\mu_k\), and the analog noise level.

\subsubsection{A note on low-precision preconditioning}
Analog construction is more constrained than rounding a
preconditioner to low precision. In a standard low-precision digital setting,
one may construct \(M_{kk}\) and store or apply a rounded matrix
\(Q_\ell(M_{kk})\), while preserving the digital sparsity pattern and retaining
standard operations such as triangular solves. In the analog setting, the block
must be mapped to feasible conductance values with input/output scalings;
additive write noise can dominate small entries; logical zeros need not be
preserved unless they are explicitly masked or left unprogrammed; and the
preconditioner must be applied by MVMs rather than triangular solves.
Consequently, analog-aware preconditioner construction must balance
exact-arithmetic quality with block size, norm, density, scaling, tile
constraints, and expected read/write perturbations.

\subsection{Reference block inverse constructions}
\label{sec:inverse_construction}

For each diagonal block \(A_{kk}\), we construct an approximate inverse
\(M_{kk}\). We first describe two reference choices: (a) exact block inverses and (b) sparse approximate inverses. These baselines separate
exact-arithmetic preconditioning strength from analog realization error and
provide points of comparison for the MCAI construction in
Section~\ref{sec:mcmc}.

The first reference choice is the exact block inverse,
\[
    M_{kk}=A_{kk}^{-1},
\]
computed in double precision before being programmed onto analog hardware.
This choice gives a strong block Jacobi preconditioner in exact arithmetic and
is therefore useful as a benchmark. However, exact block inverses are generally
dense and may be more accurate than the analog hardware can reliably realize.
Thus, exact block inverses are not
necessarily optimal analog preconditioners, even when they are strong in exact digital
arithmetic.

The second reference choice is sparse approximate inverse (SPAI) preconditioning 
\cite{benzi1999bounds,gouldscott,grote1997parallel}. For a generic diagonal
block \(C=A_{kk}\), let \(M_C\approx C^{-1}\) denote a sparse approximate
inverse with prescribed sparsity pattern \({\cal S}\). It is obtained by
solving
\begin{equation}
\label{eq:spai_main}
    \min_{\operatorname{supp}(M_C)\subseteq{\cal S}}
    \|I-CM_C\|_F^2, \quad \operatorname{supp}(M_C) = \{(i,j)\mid [M_C]_{ij}\neq 0\}.
\end{equation}
This problem decomposes by columns into independent least-squares problems and
is attractive for parallel construction. Sparse approximate inverses are also
tunable since the sparsity pattern, maximum number of nonzeros, dropping strategy,
and residual tolerance can be used to trade exact-arithmetic quality against
density and setup cost.

In digital arithmetic, the main advantage of SPAI is that sparsity reduces
both storage and MVM cost. In the analog setting, its role is more subtle. For
a fixed crossbar tile, the array-level MVM latency is largely independent of
the logical density of the stored block, while perturbations and peripheral
effects depend on the physical realization. Sparsity may still be beneficial,
for example, by reducing current draw, and the number
of entries exposed to write noise when zeros are explicitly masked. However,
digital sparsity alone is not a complete analog design criterion.

The analog perturbation model changes how approximate inverse preconditioners should be
judged. If \(M_C\) is an approximate inverse for a block \(C\), but the
programmed analog block is realized in logical coordinates as
\[
    \widetilde M_C=M_C+E,
\]
so that
\[
    C\widetilde M_C = CM_C+CE .
\]
Thus, the realized preconditioner depends both on the exact-arithmetic quality
of \(M_C\) and on the hardware perturbation \(E\). Therefore, reducing the 
residual \(\|I-CM_C\|_F\) far below the analog
perturbation scale may not improve the realized analog preconditioner. A
noise-aware SPAI strategy is possible, but the relation between sparsity
choices, scaling, and analog realization error is indirect. The MCAI
construction in the next subsection provides a complementary mechanism in
which truncation and sampling errors can be matched directly to the analog
noise floor.

\subsection{Noise-matched Monte Carlo approximate inverse preconditioners}
\label{sec:mcmc}

We now consider a complementary approximate inverse construction whose error
can be controlled explicitly. Monte Carlo approximate inverses based on
Neumann series~\cite{ALEXANDROV1998113,regenerative,doi:10.1137/130904867} have recently
been studied as highly parallel preconditioners for digital iterative solvers.
To our knowledge, their behavior as analog preconditioners has not previously
been analyzed. In the present setting, their advantage is not that they are
uniformly more accurate than exact block inverses or sparse approximate
inverses. Rather, their setup error has two transparent controls, namely, the walk
length controls truncation error in the Neumann series, and the number of
walks controls Monte Carlo sampling error. These parameters can therefore be
chosen so that the construction accuracy is commensurate with the analog
realization error rather than machine precision. We refer to the resulting
block approximate inverse preconditioners as Monte Carlo approximate inverse
(MCAI) preconditioners.

Let \(C=A_{kk}\in\mathbb{R}^{n_k\times n_k}\) be a diagonal block whose inverse
we want to approximate. Choose \(\gamma>0\) and define
\[
    H=I-\gamma C .
\]
Then \(I-H=\gamma C\), and hence
\[
    C^{-1}=\gamma(I-H)^{-1}.
\]
If \(\rho(H)<1\), the Neumann series converges and
\begin{equation}
\label{eq:neumann_block_inverse}
    C^{-1}
    =
    \gamma\sum_{t=0}^{\infty}H^t .
\end{equation}
Thus, constructing an approximate inverse reduces to estimating entries of the
powers \(H^t\) and summing them.

The Monte Carlo estimator uses the path interpretation of matrix powers. For
\(t\geq 1\), the entry \([H^t]_{ij}\) is the sum over all paths of length \(t\),
$i_0 \rightarrow i_1 \rightarrow \ldots \rightarrow i_{t}$ where $i_0=i$ and $i_{t}=j$ (i.e., paths from index \(i\) to index \(j\)):
\begin{equation} 
\label{eq:path_expansion}
[H^t]_{ij}
=
\sum_{\substack{i_0,i_1,\ldots,i_t \\ i_0 = i, \ i_t = j}}
H_{i_0i_1}H_{i_1i_2}\cdots H_{i_{t-1}i_t}.
\end{equation}

Instead of enumerating all such paths, a Markov chain samples them randomly.
Let \(P\) be a transition probability matrix satisfying \(P_{pq}>0\) whenever
\(H_{pq}\neq 0\), and $\sum_q P_{pq}=1$.
In the experiments, we use
\begin{equation}
\label{eq:mcmc_transition}
    P_{pq}
    =
    \frac{|H_{pq}|}{\sum_{q'} |H_{pq'}|}
\end{equation}
for rows with nonzero denominator. If a row of \(H\) has zero denominator (i.e., a zero row), 
then
there are no outgoing nonzero path contributions from that state. In the
implementation, such a walk may be terminated; equivalently, all subsequent
weights are set to zero.

For a walk initialized at \(X(0)=i\), define
\begin{equation}
\label{eq:mcmc_weight}
    W_0=1,
    \qquad
    W_t
    =
    W_{t-1}
    \frac{H_{X(t-1),X(t)}}{P_{X(t-1),X(t)}} .
\end{equation}
The weight \(W_t\) corrects for the fact that paths are sampled using
transition probabilities \(P_{pq}\), whereas the desired path contributions in
\eqref{eq:path_expansion} involve the matrix entries \(H_{pq}\). This
importance-sampling correction gives
\begin{equation}
\label{eq:mcmc_power_unbiased}
    \mathbb{E}\!\left[
        W_t \mathbf{1}_{\{X(t)=j\}}
        \,\middle|\, X(0)=i
    \right]
    =
    [H^t]_{ij},
\end{equation}
which is Lemma 2.1 in \cite{regenerative}.
Herein, we assume that the walk length \(\ell\) 
is a fixed truncation parameter rather than 
a random stopping time. For each row \(i\), we generate \(n_c\) independent walks, each run for
at most \(\ell\) transitions where the prefix of
length \(t\) estimates \([H^t]_{ij}\), and the sum over
\(t=0,\ldots,\ell\) estimates the truncated Neumann series. Moreover, we define
\begin{equation}
\label{eq:mcmc_inverse_estimator}
    [M_{\ell,n_c}^{\rm MC}]_{ij}
    =
    \gamma\,
    \frac{1}{n_c}
    \sum_{s=1}^{n_c}
    \sum_{t=0}^{\ell}
    W_t^{(s)}
    \mathbf{1}_{\{X_s(t)=j\}} .
\end{equation}
By linearity of expectation and \eqref{eq:mcmc_power_unbiased},
\begin{equation}
\mathbb{E}\!\left[
[M_{\ell,n_c}^{\rm MC}]_{ij}
\right]
=
\gamma
\sum_{t=0}^{\ell}
[H^t]_{ij}
=
[S_\ell]_{ij}, 
\qquad
S_\ell=\gamma\sum_{t=0}^{\ell}H^t ~\approx C^{-1}.
\end{equation}
Therefore \(M_{\ell,n_c}^{\rm MC}\) is unbiased for the truncated Neumann
approximation \(S_\ell\). Its error relative to \(C^{-1}\) has two algorithmic
sources: first, the truncation error of the infinite Neumann series, controlled by \(\ell\), and second,
the Monte Carlo sampling error, controlled by \(n_c\).

The following theorem formalizes how these two construction errors should be
balanced against the analog hardware realization error.

\begin{theorem}[Noise-matched MCAI construction]
\label{thm:noise_matched_mcmc}
Let \(C\in\mathbb{R}^{n_k\times n_k}\), let \(\gamma>0\), and define
\(H=I-\gamma C\). Assume \(\|H\|_2\le \rho<1\), and let
\[
    S_\ell=\gamma\sum_{t=0}^{\ell}H^t
\]
be the truncated Neumann approximation. Let \(M_{\ell,n_c}^{\rm MC}\) be the
Monte Carlo estimator of \(S_\ell\) based on \(n_c\) independent walks per row,
and assume that, for some \(\Theta_\ell\) independent of \(n_c\),
\[
    \mathbb{E}\|M_{\ell,n_c}^{\rm MC}-S_\ell\|_F^2
    \le
    \frac{\Theta_\ell^2}{n_c}.
\]
Suppose the programmed analog realization is
\begin{equation}
    \label{eq:dhwdef}
    \widetilde M=M_{\ell,n_c}^{\rm MC}+\Xi,
    \qquad
    \delta_{\rm hw}:=\sqrt{\mathbb{E}\|\Xi\|_F^2},
\end{equation}
where \(\Xi\) is the fixed hardware programming perturbation after rescaling to
logical matrix coordinates. Then,
\begin{equation}
\label{eq:noise_matched_mcmc_bound}
    \mathbb{E}\|C^{-1}-\widetilde M\|_F
    \le
    \gamma\frac{\sqrt{n_k}\rho^{\ell+1}}{1-\rho}
    +
    \frac{\Theta_\ell}{\sqrt{n_c}}
    +
    \delta_{\rm hw}.
\end{equation}
\end{theorem}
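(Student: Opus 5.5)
The natural approach is to split the total error with the triangle inequality, so that each of the three error sources appears in its own term:
\[
C^{-1}-\widetilde M=(C^{-1}-S_\ell)+(S_\ell-M_{\ell,n_c}^{\rm MC})-\Xi .
\]
Taking Frobenius norms and expectations gives
\[
\mathbb{E}\|C^{-1}-\widetilde M\|_F\le \|C^{-1}-S_\ell\|_F+\mathbb{E}\|S_\ell-M_{\ell,n_c}^{\rm MC}\|_F+\mathbb{E}\|\Xi\|_F .
\]
The first term is deterministic. The other two are controlled through second moments.

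For the truncation term, we use \eqref{eq:neumann_block_inverse}, which is valid because $\rho(H)\le\|H\|_2\le\rho<1$. This gives
\[
C^{-1}-S_\ell=\gamma\sum_{t\ge \ell+1}H^t=\gamma H^{\ell+1}(I-H)^{-1}.
\]
We then pass from the Frobenius norm to the spectral norm via $\|X\|_F\le\sqrt{n_k}\|X\|_2$. Submultiplicativity and the geometric series give
\[
\|C^{-1}-S_\ell\|_2\le \gamma\sum_{t\ge\ell+1}\rho^t=\gamma\rho^{\ell+1}/(1-\rho).
\]
Combining the two yields the first term of \eqref{eq:noise_matched_mcmc_bound}.

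For the sampling and hardware terms, we apply Jensen's (equivalently Cauchy--Schwarz) inequality, $\mathbb{E}\|X\|_F\le(\mathbb{E}\|X\|_F^2)^{1/2}$. Under the stated variance assumption this gives $\mathbb{E}\|S_\ell-M_{\ell,n_c}^{\rm MC}\|_F\le\Theta_\ell/\sqrt{n_c}$. By the definition \eqref{eq:dhwdef} it also gives $\mathbb{E}\|\Xi\|_F\le\delta_{\rm hw}$. Since each term is bounded separately, no independence between $\Xi$ and the Monte Carlo randomness is needed.

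The only point requiring care is the truncation step. One must make sure the Neumann tail is summed in a norm where $\|H\|_2\le\rho$ applies, and only then convert to the Frobenius norm, which is where the $\sqrt{n_k}$ factor comes from. Everything else is a direct application of Jensen's inequality.
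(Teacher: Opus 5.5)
Your proposal is correct and follows the paper's proof essentially step for step: the same three-term splitting $(C^{-1}-S_\ell)+(S_\ell-M_{\ell,n_c}^{\rm MC})-\Xi$, the triangle inequality, the geometric Neumann-tail bound with the $\sqrt{n_k}$ factor coming from $\|X\|_F\le\sqrt{n_k}\|X\|_2$, and Jensen's inequality for the sampling and hardware terms. The only cosmetic difference is in the truncation step. The paper applies the triangle inequality to the tail in the Frobenius norm and converts $\|H^t\|_F\le\sqrt{n_k}\|H^t\|_2$ term by term, while you convert the whole tail at once. Both orderings give the identical bound, so the order you single out as the step requiring care is not actually essential.
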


\begin{proof}
Since \(H=I-\gamma C\), we have \(I-H=\gamma C\). The assumption
\(\|H\|_2<1\) implies that the Neumann series converges, and therefore
\[
    C^{-1}
    =
    \gamma(I-H)^{-1}
    =
    \gamma\sum_{t=0}^{\infty}H^t .
\]
Hence
\[
    C^{-1}-\widetilde M
    =
    (C^{-1}-S_\ell)
    +
    (S_\ell-M_{\ell,n_c}^{\rm MC})
    -
    \Xi .
\]
Taking expectations and applying the triangle inequality gives
\[
    \mathbb{E}\|C^{-1}-\widetilde M\|_F
    \le
    \|C^{-1}-S_\ell\|_F
    +
    \mathbb{E}\|S_\ell-M_{\ell,n_c}^{\rm MC}\|_F
    +
    \mathbb{E}\|\Xi\|_F .
\]
For the truncation term,
\[
    C^{-1}-S_\ell
    =
    \gamma\sum_{t=\ell+1}^{\infty}H^t .
\]
Thus,
\[
\begin{aligned}
    \|C^{-1}-S_\ell\|_F
    &\le
    \gamma\sum_{t=\ell+1}^{\infty}\|H^t\|_F  \le
    \gamma\sqrt{n_k}\sum_{t=\ell+1}^{\infty}\|H^t\|_2  \le
    \gamma\frac{\sqrt{n_k}\rho^{\ell+1}}{1-\rho}.
\end{aligned}
\]

For the sampling term, Jensen's inequality and the assumed sampling-error
bound give
\[
    \mathbb{E}\|S_\ell-M_{\ell,n_c}^{\rm MC}\|_F
    \le
    \sqrt{\mathbb{E}\|S_\ell-M_{\ell,n_c}^{\rm MC}\|_F^2}
    \le
    \frac{\Theta_\ell}{\sqrt{n_c}}.
\]
Similarly,
\[
    \mathbb{E}\|\Xi\|_F
    \le
    \sqrt{\mathbb{E}\|\Xi\|_F^2}
    =
    \delta_{\rm hw}.
\]
Combining the three bounds proves the result.
\end{proof}

In the scaled programming model, \(\Xi\) includes the factor \(\mu\) for pure
additive write noise, \(\Xi=\mu N^{(W)}\) in logical coordinates. Hence
\(\delta_{\rm hw}\) depends on both the hardware write-noise level and the
scaling required to program the constructed inverse block.
The three terms in \eqref{eq:noise_matched_mcmc_bound} are the
Neumann-series truncation error, the Monte Carlo sampling error, and the fixed
hardware programming error. The bound gives a noise-matching rule suggesting that 
the inverse should not be constructed to an accuracy far below
\(\delta_{\rm hw}\), because the programmed matrix already differs from the
constructed matrix by a perturbation of this size. Since \(\ell\) controls the
truncation error and \(n_c\) controls the sampling error, the two parameters
should be chosen jointly so that
\[
    \gamma\frac{\sqrt{n_k}\rho^{\ell+1}}{1-\rho}
    \lesssim
    \delta_{\rm hw},
    \qquad
    \frac{\Theta_\ell}{\sqrt{n_c}}
    \lesssim
    \delta_{\rm hw}.
\]
Increasing \(n_c\) cannot compensate for a walk length that is too short, and
increasing \(\ell\) may require increasing \(n_c\) if the sampling variance
grows with \(\ell\).

The first inequality gives a sufficient lower bound on the walk length. When
\(\delta_{\rm hw}(1-\rho)/(\gamma\sqrt{n_k})<1\), it is enough to take
\begin{equation}
\label{eq:mcmc_walk_length_rule}
    \ell
    \gtrsim
    \frac{
    \log\!\left(\delta_{\rm hw}(1-\rho)/(\gamma\sqrt{n_k})\right)
    }{\log(\rho)}
    -1 ,
\end{equation}
with integer rounding in practice. Once such an \(\ell\) is chosen, the second
inequality can be satisfied, provided \(\Theta_\ell<\infty\), by taking
\begin{equation}
\label{eq:mcmc_num_walks_rule}
    n_c
    \gtrsim
    \left(\frac{\Theta_\ell}{\delta_{\rm hw}}\right)^2 .
\end{equation}
If the right-hand side of \eqref{eq:mcmc_walk_length_rule} is negative, it means that
the truncation term is  below the hardware level even for \(\ell=0\). In
practice, \(\Theta_\ell\) can be estimated empirically from independent
samples, and the pair \((\ell,n_c)\) should be chosen subject to the available
setup budget.

\subsection{Damping, shifting, and nested block Jacobi}
\label{sec:damping_nesting}

The perturbation model in \eqref{eq:block_error_expansion} suggests two 
mechanisms to improve the robustness. 
The first is to modify the local inverse action 
by \emph{damping and shifting},
so that
the preconditioned vector is not determined entirely by noisy 
analog application. For a diagonal block \(A_{kk}\), define
\[
    B_{kk}^{(\sigma)} := (A_{kk}+\sigma I)^{-1}.
\]
as the inverse operator of the shifted diagonal block.  
We denote by
\[
    {\cal A}_{B_{kk}^{(\sigma)}}(r_k)
    =
    B_{kk}^{(\sigma)}r_k
    +
    \delta_{B_{kk}^{(\sigma)}}(r_k)
\]
the analog realization of applying \(B_{kk}^{(\sigma)}\) to \(r_k\), and
consider the damped block action
\begin{equation}
\label{eq:shifted_damped_block}
    z_k
    =
    {\cal A}_{B_{kk}^{(\sigma)}}(r_k)
    +
    \omega r_k .
\end{equation}
In exact arithmetic, this corresponds to the block preconditioner
\[
    M_{kk}^{(\sigma,\omega)}
    = B_{kk}^{(\sigma)}
    +
    \omega I 
\]
The shift \(\sigma\) regularizes the local inverse and may reduce the norm or
dynamic range of the programmed block. The identity term \(\omega r_k\) 
acts as
a reliable unpreconditioned search direction. This is useful
when the analog inverse action is strongly perturbed as then the preconditioned vector
retains part of the original input direction \(r_k\), instead of relying only
on the noisy analog inverse action. Thus, \(\omega\) can be viewed as a damping
or safeguarding parameter. If \(\omega\) is too large, however, the
preconditioner approaches a scaled identity map and loses the benefit of the
local inverse.

The second mechanism is to reduce the size of the matrices programmed onto the
analog hardware by \emph{nested preconditioning}. 
Rather than explicitly applying an approximate inverse of
\(A_{kk}\), we may compute \(y_k\approx A_{kk}^{-1}r_k\) by solving
\begin{equation}
\label{eq:nested_block_solve}
    A_{kk}y_k=r_k
\end{equation}
with inner FGMRES iterations, preconditioned by another level of an analog
block Jacobi approximate inverse built from smaller sub-blocks of \(A_{kk}\).
If the outer matrix is partitioned into \(p\) diagonal blocks and each diagonal
block is further partitioned into \(q\) sub-blocks, then the analog blocks have
dimension approximately \(n/(pq)\), instead of \(n/p\). Such a two-level partitioning is depicted in Figure~\ref{fig:matrixnest}; each linear system~\eqref{eq:nested_block_solve} is preconditioned by $\operatorname{blkdiag}(D_1^{(k)},\ldots,D_q^{(k)})$. In the common case
\(q=p\), the programmed block size is approximately \(n/p^2\). Notably, very small analog
crossbars may be inefficient because ADC/DAC overhead becomes large relative to the
amount of analog computation; however, very large tiles are more sensitive to wire,
current, and thermal non-idealities \cite{analogfrontier}. 
Nesting therefore
provides a practical way to keep the programmed analog blocks within a more
favorable size range while reducing analog exposure from analog noise and retaining much of the effect of a larger block
solve.

\begin{figure}[t]
    \centering
    \includegraphics[width=0.72\linewidth]{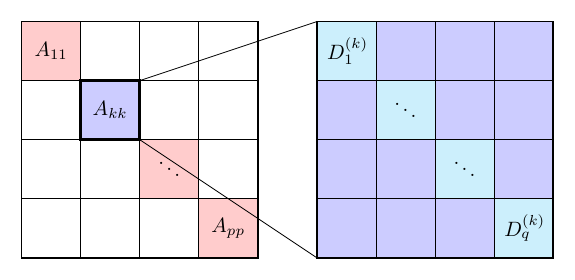}
    \caption{Nested block Jacobi preconditioning. Instead of applying an
    approximate inverse of \(A_{kk}\) directly, the block system
    \(A_{kk}y_k=r_k\) is solved approximately by an inner Krylov method
    preconditioned with smaller analog block Jacobi approximate inverses.}
    \label{fig:matrixnest}
\end{figure}

Thus, shifting, damping, and nesting play complementary roles. Shifting and damping reduces sensitivity
to unreliable analog inverse actions, while nesting reduces the size of the
analog blocks that must be programmed and applied.

\section{Numerical Experiments}
\label{sec:experiments}

This section evaluates the analog-aware block Jacobi preconditioners developed
in Section~\ref{sec:alg}. The experiments connect the analog MVM perturbation
model in Section~\ref{sec:analog_noise_model} with observed FGMRES convergence
behavior. We first use a spectral diagnostic on a smaller Poisson problem to
illustrate how analog programming perturbations can change the realized
preconditioned operator. We then study FGMRES convergence of a
Poisson problem on a
\(16\times16\times16\) mesh, using exact block inverses to isolate
analog preconditioner application error and to examine block-size and damping
tradeoffs. We next compare SPAI and MCAI blocks where the MCAI parameter study
tests the saturation behavior suggested by the noise-matching principle in
Theorem~\ref{thm:noise_matched_mcmc}. Finally, we test nested block
preconditioning on nonsymmetric and larger Poisson problems to evaluate
whether reducing the analog block size preserves the convergence behavior of
the corresponding digital preconditioners.

The main performance measure is the number of FGMRES iterations required to
reach the prescribed relative residual norm tolerance. For comparisons with a
digital preconditioner, we also use the analog-to-digital iteration ratio
\[
    R_{\rm A/D}
    =
    \frac{\text{\texttt{\#} analog preconditioned iterations}}
         {\text{\texttt{\#} digital preconditioned iterations}} \ .
\]
These are iteration-count metrics only. Since the analog preconditioner
applications are simulated rather than executed on physical hardware, we do not
report end-to-end runtime or energy measurements.

All experiments were conducted in \textsc{Matlab} with 64-bit arithmetic on
all 8 cores of an AMD Ryzen AI 7 PRO 350 CPU. A \textsc{Matlab}
implementation of the publicly available analog hardware simulator was used
\cite{analogsim}. Unless otherwise stated, unrestarted FGMRES was run to
relative residual norm tolerance \(10^{-8}\). The analog simulator parameters
are listed in Table~\ref{tab:noisevalues}. Each plotted data point
represents the mean FGMRES iteration count over 25 right-hand sides. For
analog experiments, this corresponds to 5 independent analog preconditioner
realizations, each applied to 5 right-hand sides. Shaded regions denote plus-or-minus 
one standard deviation. Cases that did not converge within the
maximum allowed number of FGMRES iterations are indicated in the corresponding
figure captions. MCAI-specific parameters, such as \(n_c\), \(\ell\), and
\(\gamma\), are stated in the experiments where they are used.

Unless otherwise stated, the SPAI experiments use an adaptive sparsity pattern
\(\mathcal{S}\) with nonzero-entry budget
\[
    \mathtt{nnz}_{\rm SPAI}=40\,\mathtt{nnz}(A_{kk}),
    \qquad
    \mathtt{tol}_{\rm SPAI}=0.05 .
\]
Entries outside the prescribed sparsity pattern are not explicitly masked in
the analog simulator and are therefore subject to write noise. This convention
is relevant for interpreting the analog SPAI results where sparsity directly
reduces storage and MVM cost in digital arithmetic, whereas in the analog
setting it also interacts with programming noise 
and the physical realization of zeros.

\subsection{Spectral diagnostic for preconditioned operators}
\label{sec:precond_spectra}

Before studying FGMRES convergence, we include a spectral diagnostic on a
smaller problem for which all eigenvalues of the preconditioned operators can
be computed explicitly. We discretize the three-dimensional Poisson equation
on a \(12\times 12\times 12\) grid using the standard 7-point finite difference
stencil, yielding a matrix of dimension \(n=12^3\). This experiment illustrates
the perturbation discussion in Section~\ref{sec:inverse_construction} where even
when an approximate inverse is effective in exact arithmetic, its realized
analog version can change both the location and the qualitative structure of
the preconditioned spectrum.

Because the full analog preconditioner action is input-dependent, spectra of analog preconditioners must be interpreted
carefully. For this diagnostic only, we freeze one analog programming
realization and examine the eigenvalues of the corresponding fixed
right-preconditioned matrix \(A\widetilde M\), where \(\widetilde M\) denotes the
programmed preconditioner after write perturbation. 

Figure~\ref{fig:precondspectra} compares convex hulls of spectra for exact
block inverses and SPAI blocks, under both digital and analog realizations and for several choices
of the number of blocks \(p\). In the digital exact-inverse case, the block
Jacobi preconditioner is symmetric positive-definite, and \(AM\) is similar to
the symmetric matrix \(M^{1/2}AM^{1/2}\); hence its eigenvalues are real. At the 
same time, neither SPAI nor analog-write perturbations 
preserve this spectral property. Consequently, the
realized operator \(A\widetilde M\) may have its eigenvalues
move into the complex plane. In this particular realization, the analog exact-inverse case has some
eigenvalues with negative real part, showing that the realized
right-preconditioned operator can lose the positive spectral structure present
in the corresponding digital exact-inverse case. This diagnostic supports the main point that
preconditioner quality cannot be assessed only from the ideal matrix \(AM\); rather, hardware perturbations must also be taken into consideration.

\begin{figure}[t]
  \centering
  \includegraphics[width=0.72\linewidth]{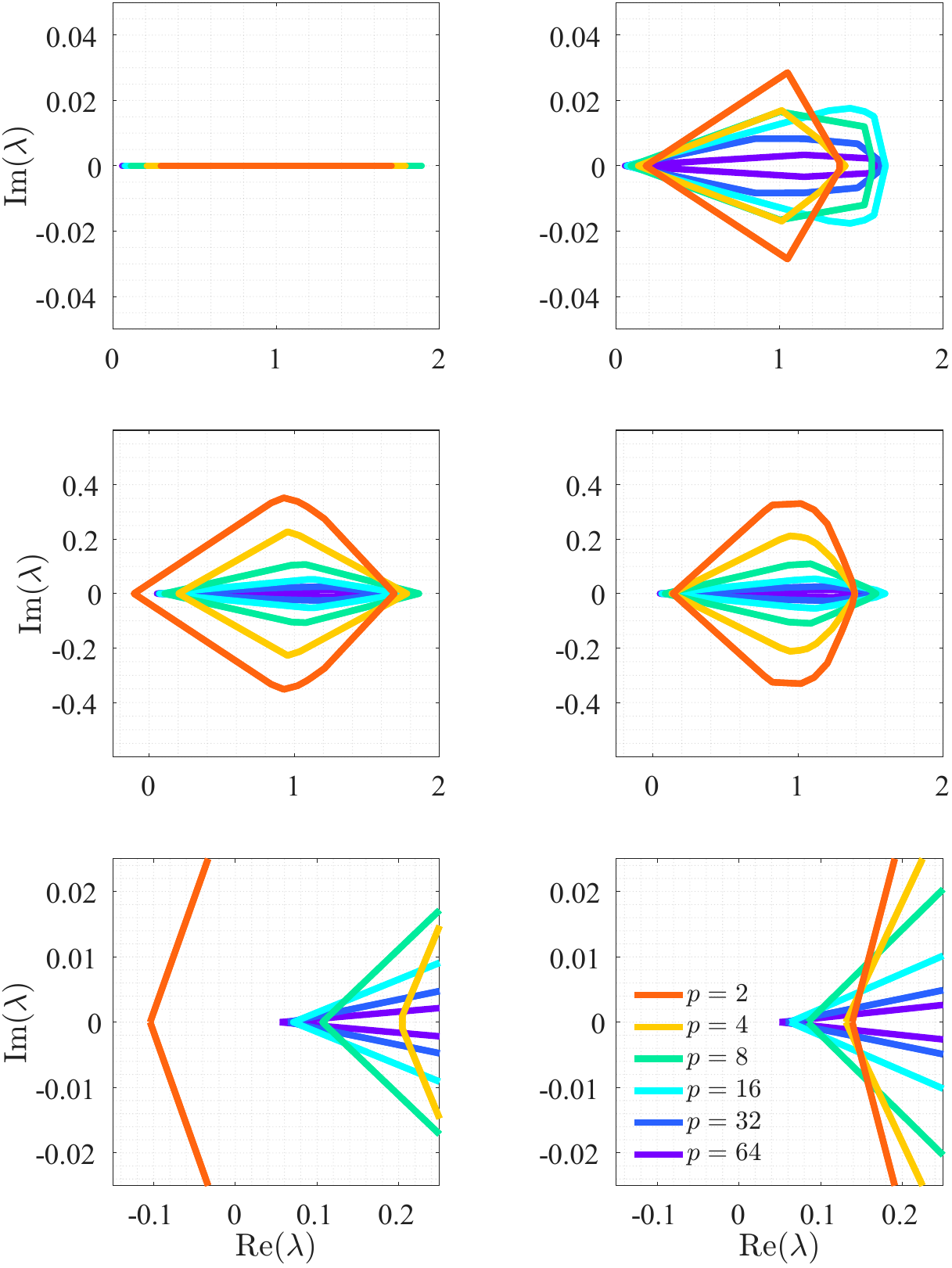}
  \caption{Convex hull of spectra of right-preconditioned operators for the
  three-dimensional Poisson equation discretized on a
  \(12\times12\times12\) grid. Top row: digital preconditioners. Middle row:
  analog preconditioners with one frozen programming realization. Bottom row:
  zoomed view of the analog spectra near the origin. Left column: exact block inverses. Right
  column: SPAI blocks. The analog spectra show that write perturbations can
  shift the preconditioned spectrum into the complex plane.}
  \label{fig:precondspectra}
\end{figure}

\subsection{Convergence effects of block size and damping}
\label{sec:poisson3d_small}

We now turn from spectral diagnostics to FGMRES convergence. For these
experiments, we use the same three-dimensional Poisson model problem on a
larger \(16\times16\times16\) grid. The coefficient matrix is obtained from
the standard 7-point finite difference stencil and has dimension $n=16^3=4096$. This problem is small enough to allow multiple right-hand sides and independent
analog realizations, while still exhibiting the block-size and analog-noise
tradeoffs discussed in Section~\ref{sec:alg}.

\subsubsection{Block-size tradeoff with exact block inverses}
\label{sec:poisson_exact}

We begin with exact block inverses. For each diagonal block \(A_{kk}\), the
preconditioner block is computed in double precision as
\[
    M_{kk}=A_{kk}^{-1},
\]
and then applied either digitally or through the analog simulator. Since the
inverse blocks are exact at setup, this experiment isolates the effect of the
analog MVM realization where any difference between the digital and analog cases is
due to analog non-idealities in the preconditioner application.

Figure~\ref{fig:lapnoshift} compares digital and analog preconditioning as the
number of Jacobi blocks \(p\) varies. In exact digital arithmetic, increasing \(p\) decreases the block size and
weakens the block Jacobi preconditioner, since each diagonal block captures
less of the global coupling structure of \(A\). In contrast, for analog preconditioning, when \(p\) is small, the
blocks are large and the exact block inverses are strong, but the programmed
analog blocks are also more exposed to the perturbations described in
Section~\ref{sec:analog_noise_model}. When \(p\) is large, the analog exposure
per block is reduced, but the block Jacobi approximation is weaker. The
non-monotonic iteration behavior in Figure~\ref{fig:lapnoshift} is therefore
consistent with the error decomposition in \eqref{eq:block_error_expansion}.

\begin{figure}[t]
  \centering
  \includegraphics[width=0.88\linewidth]{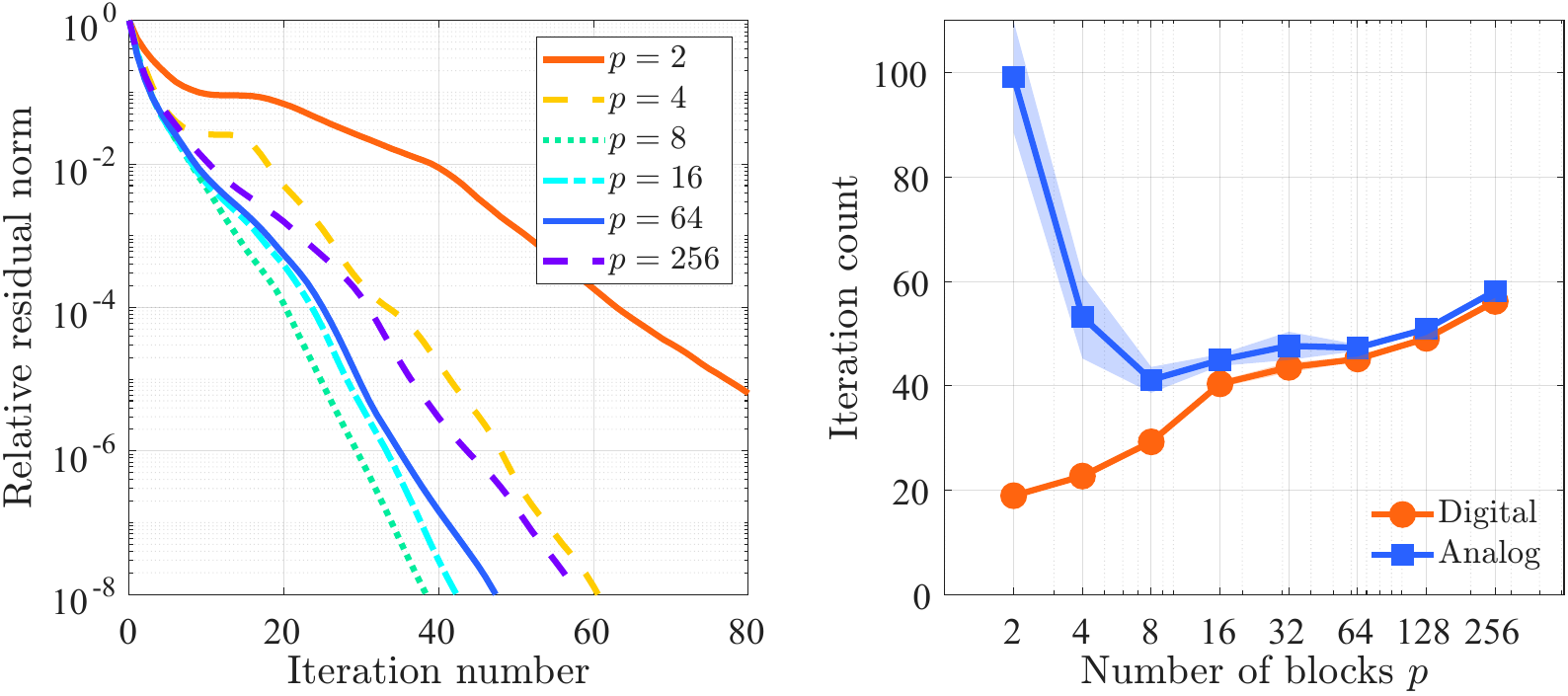}
  \caption{Digital and analog block Jacobi preconditioning with exact block
  inverses for the three-dimensional Poisson problem on a
  \(16\times16\times16\) grid. Left: analog convergence curves for a
  representative right-hand side. Right: mean FGMRES iteration count as the
  number of blocks \(p\) varies. Changing \(p\) changes both the
  exact-arithmetic preconditioner strength and the analog exposure of each
  programmed block, leading to a non-monotonic analog block-size tradeoff.}
  \label{fig:lapnoshift}
\end{figure}

We next test the shifted and damped block actions from
Section~\ref{sec:damping_nesting}. In exact arithmetic, the corresponding block
preconditioner is
\[
    M_{kk}^{(\sigma,\omega)}
    =
    (A_{kk}+\sigma I)^{-1}+\omega I .
\]
Figure~\ref{fig:lapshift} reports results for fixed \(p=8\). In the left
panel, the inner shift \(\sigma\) is varied with \(\omega=0\). In the right
panel, the outer damping parameter \(\omega\) is varied with \(\sigma=0\). The
inner shift gives little systematic improvement for this problem. By contrast,
a modest identity component \(\omega I\) reduces the mean analog iteration
count by roughly \(20\%\) relative to the undamped case. This agrees with the
interpretation in Section~\ref{sec:damping_nesting} where the \(\omega r_k\) term
provides a reliable component of unpreconditioned search direction when the analog
inverse action is noisy. If \(\omega\) is too large, however, the
preconditioner approaches a scaled identity map and the benefit of the block
inverse is lost.

\begin{figure}[t]
  \centering
  \includegraphics[width=0.88\linewidth]{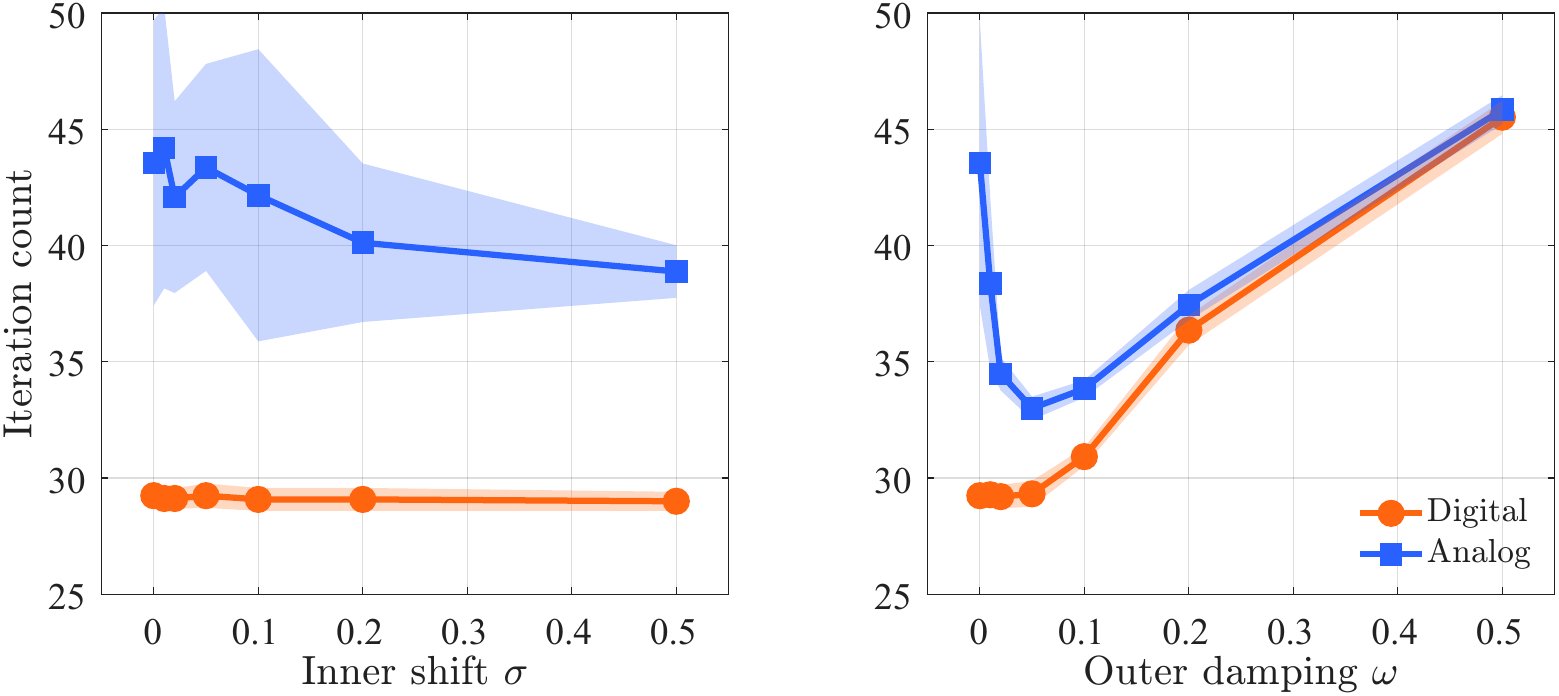}
  \caption{Analog preconditioning with exact block inverses for fixed \(p=8\).
  Left: the inner shift \(\sigma\) is varied with \(\omega=0\). Right: the
  outer damping parameter \(\omega\) is varied with \(\sigma=0\). The inner
  shift provides little systematic benefit, while a modest outer identity term
  improves robustness. Excessive damping weakens the preconditioner by moving
  it toward a scaled identity action.}
  \label{fig:lapshift}
\end{figure}

\subsubsection{SPAI and MCAI approximate inverse blocks}
\label{sec:poisson_approx_inverse}
We next replace the exact block inverses by approximate inverse constructions.

The first construction is a sparse approximate inverse (SPAI). For each
diagonal block \(A_{kk}\), we compute a sparse matrix
\(M_{kk}\approx A_{kk}^{-1}\) by minimizing the Frobenius-norm residual
\(\|I-A_{kk}M_{kk}\|_F\) subject to a sparsity constraint, as described in
Section~\ref{sec:inverse_construction}. Figure~\ref{fig:spai} shows the
resulting analog FGMRES iteration counts. The SPAI preconditioner exhibits the
same qualitative behavior observed for exact block inverses, i.e., the iteration
count is non-monotonic in the number of blocks and a modest damping parameter
improves robustness. This also showcases that minimizing the exact-arithmetic
residual \(\|I-A_{kk}M_{kk}\|_F\) alone is not sufficient to predict analog
performance, because the realized preconditioner also depends on the analog
perturbation terms in \eqref{eq:block_error_expansion}.

\begin{figure}[t]
  \centering
  \includegraphics[width=0.88\linewidth]{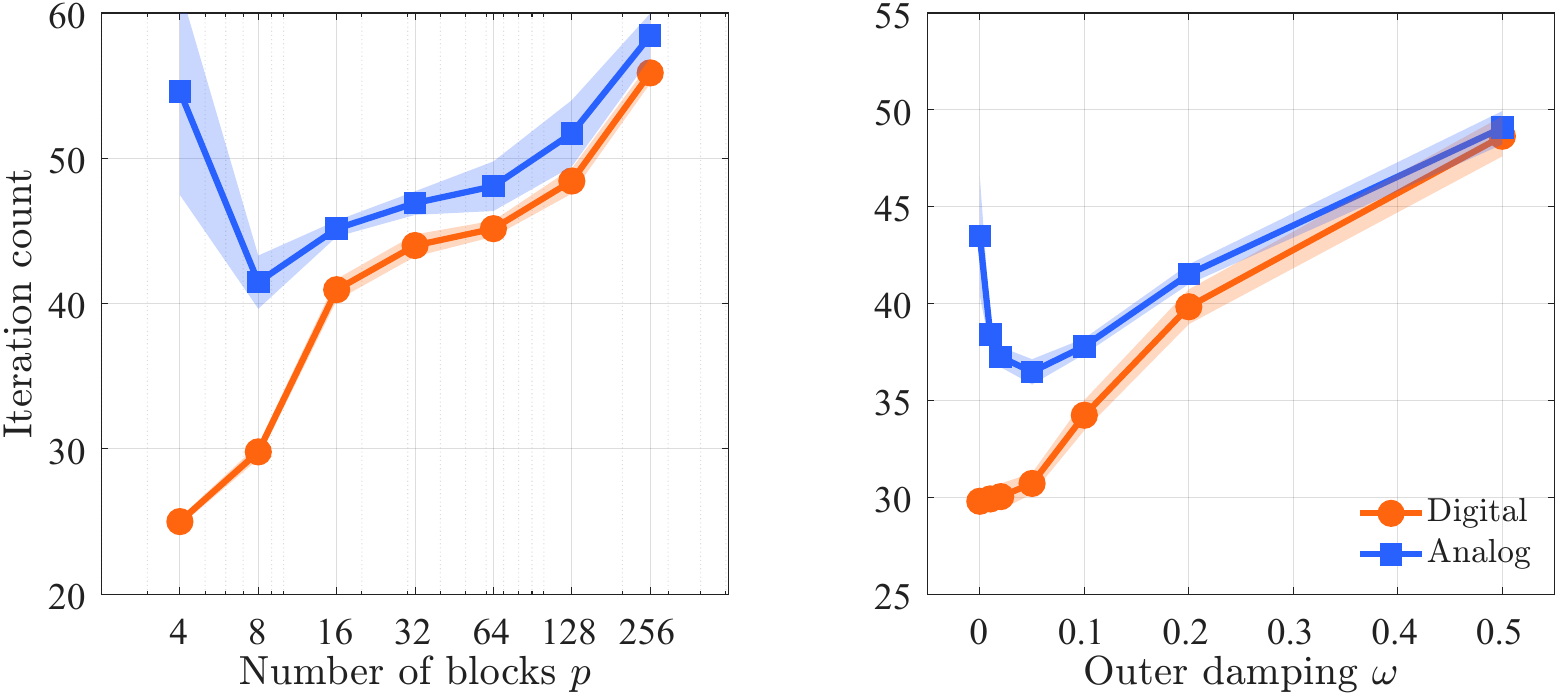}
  \caption{Analog block Jacobi preconditioning with SPAI blocks for the
  three-dimensional Poisson problem on a \(16\times16\times16\) grid. Left:
  mean FGMRES iteration count as the number of blocks varies. Right: damping
  study for fixed \(p=8\) with \(\sigma=0\). The results show the same
  analog-specific block-size tradeoff observed for exact inverses, together
  with a useful damping regime.}
  \label{fig:spai}
\end{figure}

The second construction is the Monte Carlo approximate inverse (MCAI)
preconditioner based on a Neumann series, described in Section~\ref{sec:mcmc}.
For a diagonal block \(C=A_{kk}\), we choose \(\gamma\) according to
\[
    {\gamma}^{-1}=1.1\,\lambda_{\max}(C),
\]
which gives \(\rho(I-\gamma C)<1\) for the positive-definite Poisson blocks
considered here. The MCAI parameters are the number of random walks \(n_c\) and
the walk length \(\ell\).

To compare MCAI construction accuracy with analog programming error, we define
the relative construction error
\[
    E_{\rm constr}(\ell,n_c)
    =
    \frac{
    \sqrt{\mathbb{E}\|M_{\ell,n_c}^{\rm MC}-C^{-1}\|_F^2}
    }
    {\|C^{-1}\|_F},
\]
and the relative analog programming error
\[
    E_{\rm hw}
    =
    \frac{\delta_{\rm hw}}{\|C^{-1}\|_F}.
\]
Here, \(M_{\ell,n_c}^{\rm MC}\) is the MCAI block constructed with parameters
\((\ell,n_c)\). In the experiments, \(\delta_{\rm hw}\) (defined in \eqref{eq:dhwdef}) is estimated from independent analog
write-noise realizations using the same programming, scaling, and output
rescaling convention. Because the programmed block is \(M_{\ell,n_c}^{\rm MC}/\mu\), the corresponding
logical write perturbation includes the factor \(\mu\). Thus, \(E_{\rm hw}\) is
estimated using the same programming, scaling, and ADC rescaling convention as
the analog simulator.
We measure the relative position of the construction error to the hardware
error by
\[
    R_{\rm MC}
    =
    \frac{E_{\rm constr}(\ell,n_c)}{E_{\rm hw}}.
\]
Values \(R_{\rm MC}>1\) indicate an under-resolved MCAI block whose error is
larger than the analog programming error. Values \(R_{\rm MC}\lesssim 1\)
indicate that the MCAI construction error is at or below the analog hardware
level. Driving \(R_{\rm MC}\) far below one produces a more accurate inverse in
exact arithmetic, but the additional accuracy may not be useful after analog
programming.

Figure~\ref{fig:noise_matched_mcmc} makes this noise-matching behavior
explicit. The left panel reports \(R_{\rm MC}\) for a representative diagonal
block with \(p=8\). For small \(\ell\) and \(n_c\), \(R_{\rm MC}>1\), so the
approximate inverse is under-resolved. As \(\ell\) and \(n_c\) increase,
\(R_{\rm MC}\) decreases and enters the noise-matched regime
\(R_{\rm MC}\lesssim 1\). The right panel reports the corresponding
analog-to-digital iteration ratio \(R_{\rm A/D}\) for the full preconditioned
FGMRES solve. The iteration ratio decreases as the MCAI block becomes more
accurate, but then plateaus once the construction error is below the analog
programming error.\footnote{This is consistent with Theorem~\ref{thm:noise_matched_mcmc} 
which states that after the truncation and sampling errors are reduced to the level of the
hardware realization error, additional setup work improves the inverse in exact
arithmetic but does not substantially improve analog FGMRES convergence.}

\begin{figure}[t]
  \centering
  \includegraphics[width=0.95\linewidth]{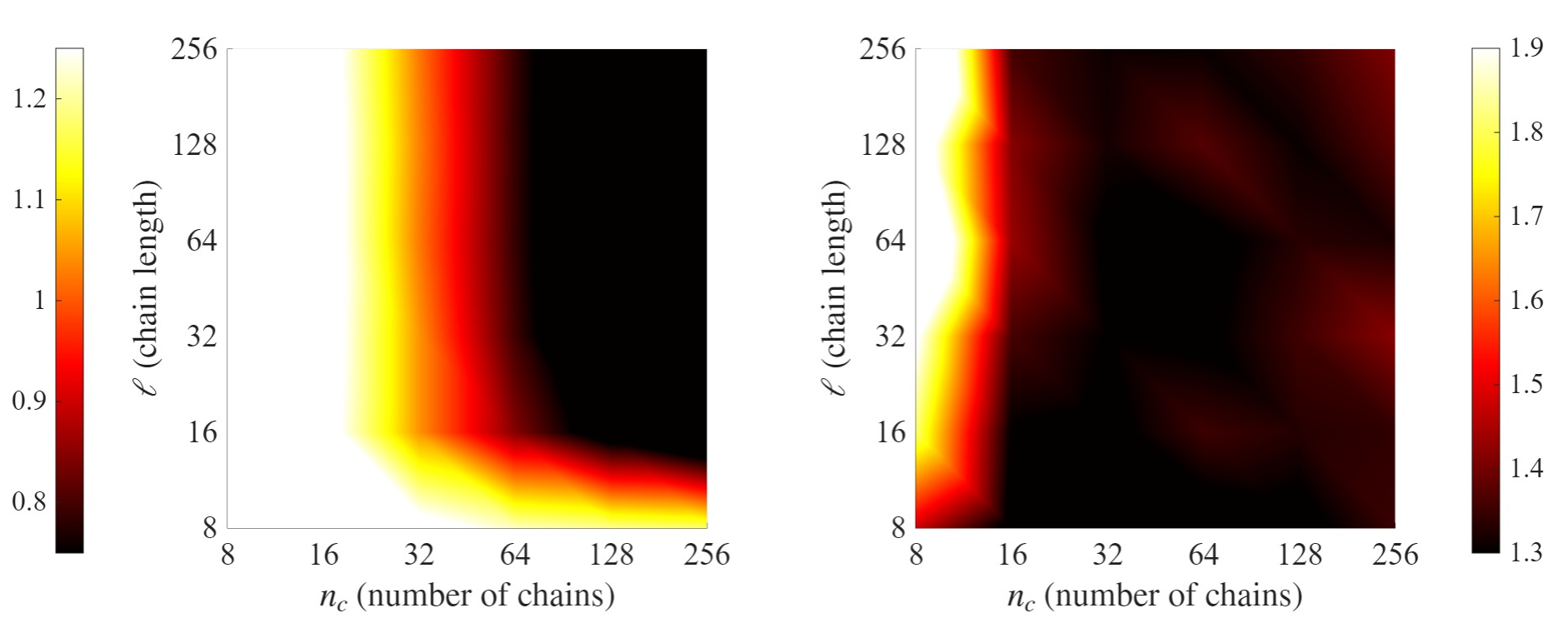}
  \caption{Effect of the MCAI parameters on FGMRES convergence for the
  \(16\times16\times16\) Poisson problem with fixed \(p=8\). The parameters are
  the number of random walks \(n_c\) and the walk length \(\ell\). Left:
  construction-error-to-hardware-error ratio \(R_{\rm MC}\) for a representative
  block. Values below one indicate that the MCAI construction error is no
  larger than the analog programming error. Right: analog-to-digital iteration
  ratio \(R_{\rm A/D}\). The iteration ratio plateaus as the MCAI construction
  is refined, consistent with the noise-matching principle of
  Theorem~\ref{thm:noise_matched_mcmc}.}
  \label{fig:noise_matched_mcmc}
\end{figure}

Table~\ref{tab:mcmc_regimes} summarizes three representative MCAI parameter
choices from Figure~\ref{fig:noise_matched_mcmc}. The under-resolved case has
\(R_{\rm MC}>1\) and requires substantially more analog FGMRES iterations. The
noise-matched and more-resolved cases both have \(R_{\rm MC}<1\), but the
more-resolved case provides little additional convergence benefit despite
requiring more setup work. 
This supports the practical rule suggested by
Theorem~\ref{thm:noise_matched_mcmc}, i.e.,  MCAI parameters should be chosen so that
the construction error is comparable to the analog programming error.

\begin{table}[t]
\centering
\caption{Representative MCAI parameter regimes for a Poisson block with
\(p=8\). The construction error \(E_{\rm constr}\) is measured relative to
\(\|C^{-1}\|_F\), and \(E_{\rm hw}\) is the corresponding relative analog
programming error. Their ratio is the construction-error-to-hardware-error ratio \(R_{\rm MC}\).}
\label{tab:mcmc_regimes}
\small
\begin{tabular}{c|c|c|c|c|c|c}
\textbf{Regime} &
\(\ell\) &
\(n_c\) &
\(E_{\rm constr}\) &
\(E_{\rm hw}\) &
\(R_{\rm MC}\) &
\textbf{Analog iterations (mean)} \\ \hline
Under-resolved & 16 & 16 & 0.311 & 0.240 & 1.3 & 55.8 \\ 
Noise-matched & 64 & 64 & 0.160 & 0.204 & 0.78 & 40.7 \\ 
More-resolved & 128 & 128 & 0.114 & 0.192 & 0.59 & 39.9 \\ \hline
\end{tabular}
\end{table}

For the remaining MCAI experiments, we use \(n_c=64\) and \(\ell=64\), which
places the construction in the noise-matched regime of
Figure~\ref{fig:noise_matched_mcmc}. Figure~\ref{fig:mcmc} then studies two
additional design choices for this fixed MCAI construction. The left panel
varies the number of Jacobi blocks \(p\), showing the same analog
block-size tradeoff observed for exact inverses and SPAI. The \(p=2\) analog
preconditioner did not converge within the maximum allowed 400 FGMRES
iterations and is therefore omitted. The right panel fixes \(p=8\), sets
\(\sigma=0\), and varies the outer damping parameter \(\omega\). As in the
exact-inverse and SPAI experiments, a modest damping parameter improves
robustness, while excessive damping weakens the preconditioner.

\begin{figure}[t]
  \centering
  \includegraphics[width=0.88\linewidth]{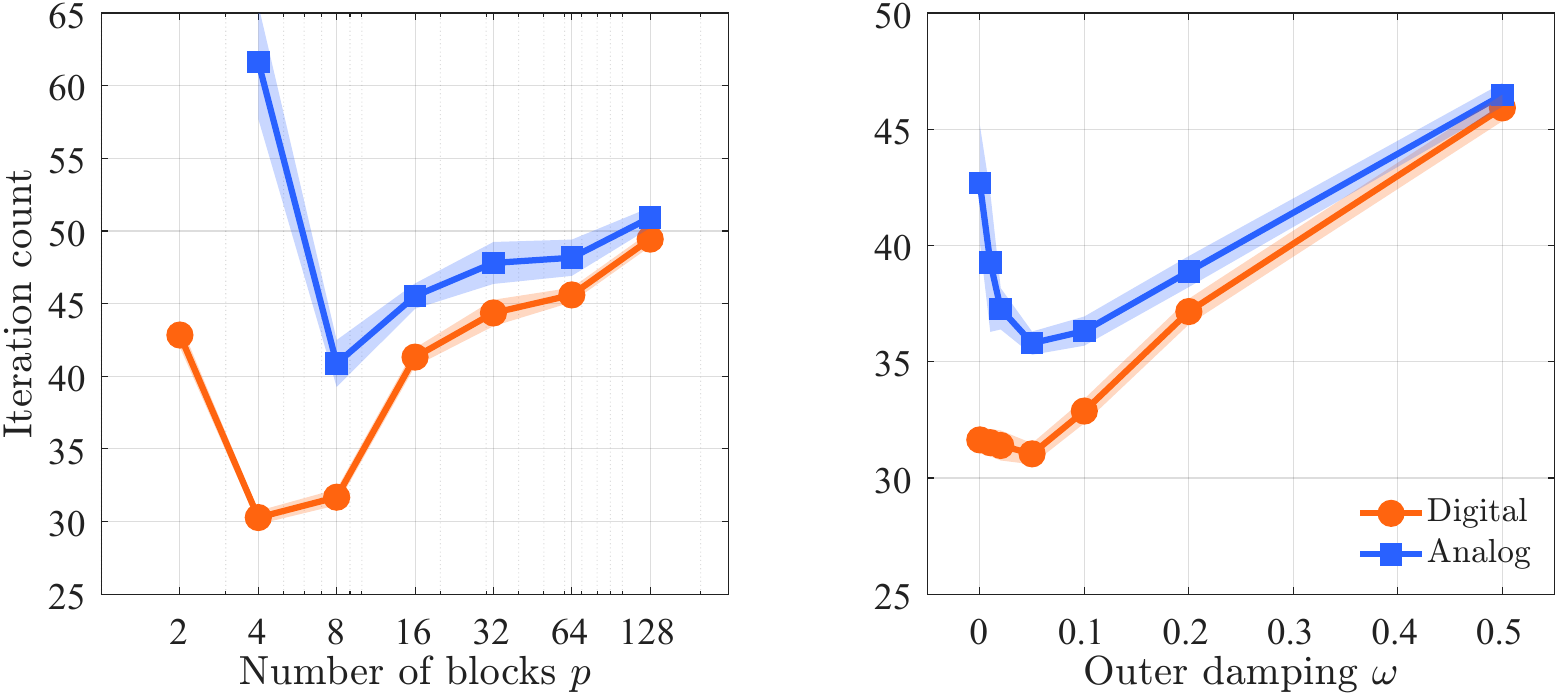}
  \caption{Analog block Jacobi preconditioning with MCAI blocks for the
  three-dimensional Poisson problem on a \(16\times16\times16\) grid. Left:
  mean FGMRES iteration count as the number of blocks \(p\) varies, using
  \(n_c=\ell=64\). The \(p=2\) analog case exceeded the maximum iteration count
  and is omitted. Right: damping study for fixed \(p=8\) and \(\sigma=0\),
  varying the outer damping parameter \(\omega\).}
  \label{fig:mcmc}
\end{figure}

\subsection{Analog tradeoffs for a nonsymmetric PDE problem}
\label{sec:cdr}

The preceding Poisson experiments involve a symmetric positive-definite (SPD) coefficient matrix. We now test whether the same analog-aware behavior persists
for a non-symmetric problem. More specifically, we consider the following three-dimensional
convection--diffusion--reaction equation
\[
    -h^2\nabla^2 u + h\,c\cdot \nabla u + \xi u = f,
\]
with
\[
    c=[-0.1,-1.2,-0.3]^T,\qquad \xi=-0.8,\qquad h=1/n_x .
\]
The Laplacian is discretized by the standard 7-point finite difference stencil,
and the convection term is discretized by centered differences without
upwinding. We use \(n_x=n_y=n_z=16\), so the matrix dimension is \(4096\).
In this subsection, the preconditioner blocks are exact inverses computed in
double precision before analog programming. Thus, as in
Section~\ref{sec:poisson_exact}, the experiment isolates the effect of analog
preconditioner application rather than inverse-construction error.

Figure~\ref{fig:cdr} shows that the nonsymmetric problem exhibits the same
qualitative analog-aware behavior as the Poisson problem. The \(p=2\) analog case exceeded the maximum iteration count. The remaining
data show that the analog iteration count is not monotone in the number of
blocks. More specifically, large Jacobi blocks give strong exact arithmetic preconditioners but are
more sensitive to analog perturbations, while very small blocks reduce analog
exposure but weaken the block Jacobi approximation. A modest outer damping parameter \(\omega\) improves
robustness. These
results indicate that the block-size and damping tradeoffs predicted by the
perturbation model in \eqref{eq:block_error_expansion} are not restricted to
SPD matrices.

\begin{figure}[t]
  \centering
  \includegraphics[width=0.88\linewidth]{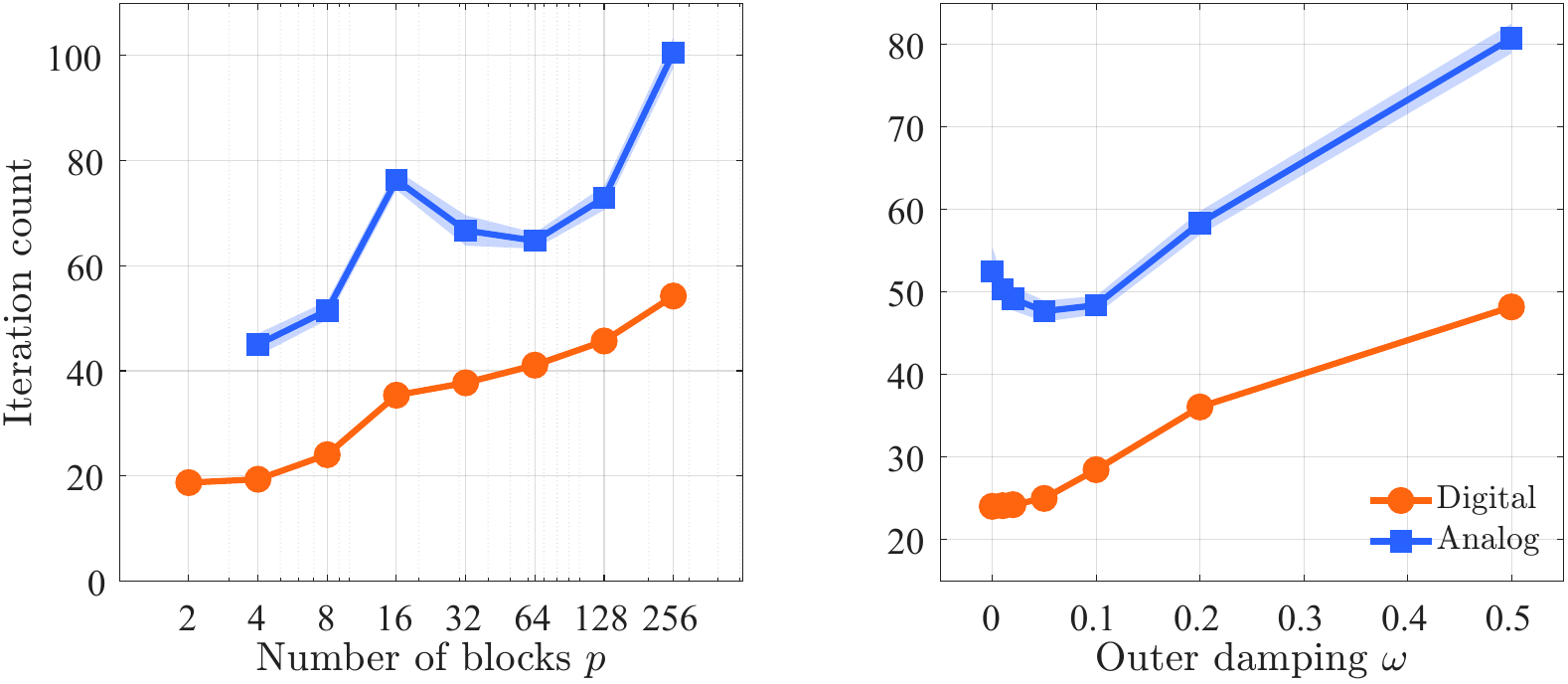}
  \caption{Analog block Jacobi preconditioning with exact block
  inverses for the nonsymmetric convection--diffusion--reaction problem on a
  \(16\times16\times16\) grid. Left:
  mean FGMRES iteration count as the number of blocks \(p\) varies. The \(p=2\) analog case exceeded the maximum iteration count
  and is omitted. Right: damping study for fixed \(p=8\) and \(\sigma=0\),
  varying the outer damping parameter \(\omega\). The results show the same qualitative
  analog-aware behavior observed for Poisson: an intermediate block-size
  regime is most effective and a modest outer damping parameter \(\omega\) improves
  robustness.}
  \label{fig:cdr}
\end{figure}

\subsection{Nested preconditioning on the \texorpdfstring{$16^3$}{16x16x16} model problems}
\label{sec:nested_model_problems}

The previous experiments indicate that large programmed blocks can be sensitive
to analog perturbations. We therefore test the nested construction from
Section~\ref{sec:damping_nesting}, which reduces the size of the matrices
placed on analog arrays. We apply the same nested strategy to both
Poisson and
convection--diffusion--reaction \(16\times 16\times 16\) model problems.

In these experiments, the outer matrix is partitioned into \(p\) block Jacobi
blocks, and each diagonal block is further partitioned into \(p\) sub-blocks for
the inner preconditioner. Thus, the matrices actually programmed onto analog
hardware have dimension approximately \(n/p^2\), rather than \(n/p\). The local
systems
\[
    A_{kk}y_k=r_k
\]
are solved by an inner preconditioned FGMRES iteration with relative residual norm tolerance
\(10^{-2}\). To isolate the effect of nesting, the inverse of each smallest
sub-block is computed exactly before analog programming.

Figure~\ref{fig:lapnnested} shows that the nested analog preconditioner closely
tracks the corresponding  digital preconditioner in iteration count for
both model problems, indicating that nesting reduces analog exposure by using
smaller programmed blocks, while the inner iteration preserves much of the
convergence benefit of a larger block solve.

\begin{figure}[t]
  \centering
  \includegraphics[width=0.88\linewidth]{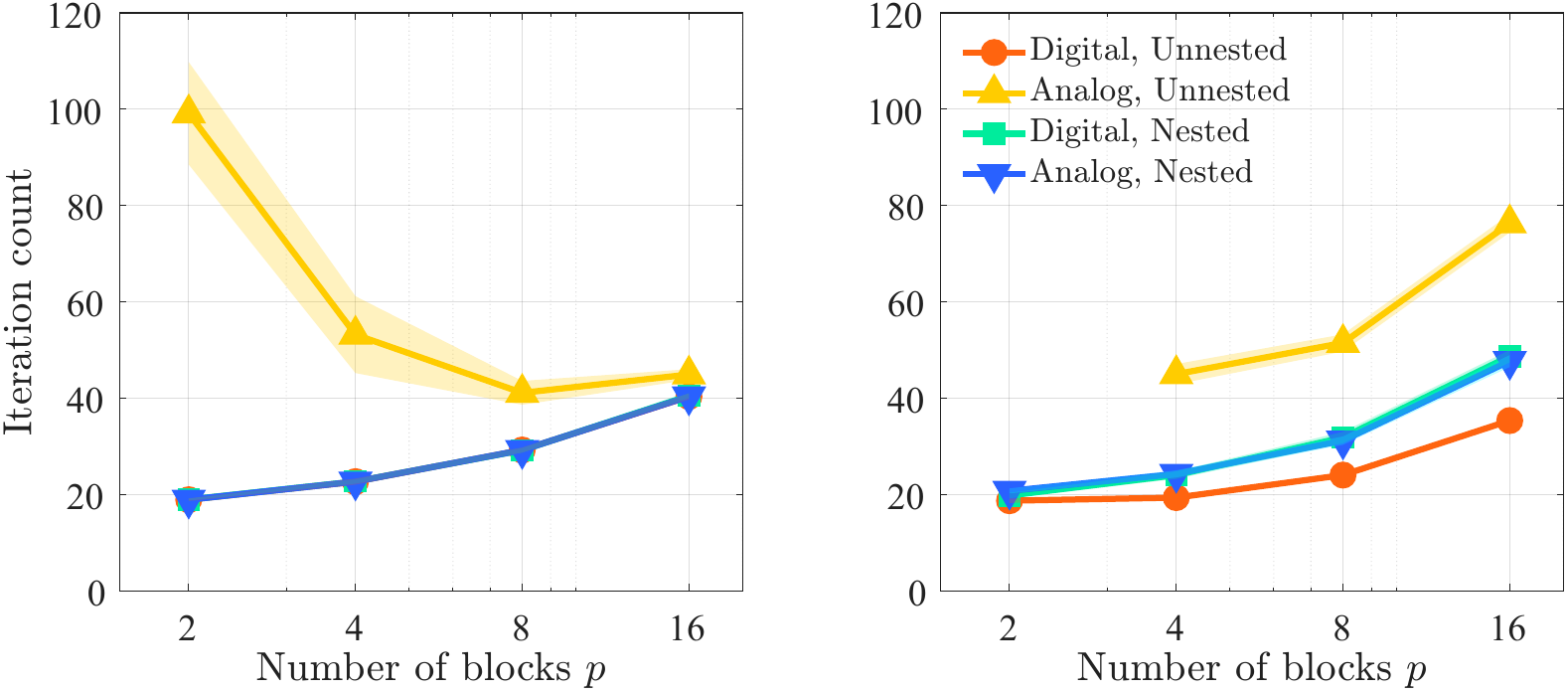}
  \caption{Nested block Jacobi preconditioning for the \(16\times16\times16\)
  model problems. Left: Poisson problem. Right: convection--diffusion--reaction
  problem. The outer and inner levels use the same number of blocks \(p\), so
  the analog sub-blocks have size approximately \(n/p^2\). The inner FGMRES
  tolerance is \(10^{-2}\). The \(p=2\) unnested analog case exceeded the maximum iteration count
  and is omitted. In both cases, the nested analog preconditioner
  closely matches the nested digital preconditioner, indicating that reducing
  the analog block size can preserve the convergence benefits of larger block
  solves.}
  \label{fig:lapnnested}
\end{figure}

\subsection{Larger Poisson tests with nested analog MCAI preconditioning}
\label{sec:large_poisson}

The preceding experiments leverage nesting to reduce analog block
sizes. We now test this strategy on two larger Poisson problems. The
first is a three-dimensional (3D) Poisson problem on a \(32\times32\times32\) grid,
giving \(n=32768\). The second is a two-dimensional (2D) Poisson problem on a
\(180\times180\) grid, giving \(n=32400\).

Table~\ref{tab:largepoisson} reports the mean and standard deviation of the
FGMRES iteration counts. We compare three preconditioned cases: (a) an unnested
digital block Jacobi preconditioner with exact block inverses, (b) a nested digital
block Jacobi preconditioner with exact sub-block inverses, and (c) a nested analog
block Jacobi preconditioner. The unnested preconditioner uses $p = 8$ blocks; for both nested cases, the outer and inner
levels both use \(p=q=8\) blocks. The analog sub-block preconditioners are MCAI
blocks with \(n_c=\ell=64\), together with damping parameter \(\omega=0.05\).

The nested analog preconditioner closely matches both
digital preconditioners in both larger tests. For the 3D Poisson problem, the
nested analog method requires \(38.4\) iterations on average, compared with
\(38.6\) for nested digital preconditioning and \(38.3\) for unnested digital
preconditioning. For the 2D Poisson problem, the nested analog method requires
\(93.8\) iterations on average, compared with \(94.0\) for nested digital
preconditioning and \(92.4\) for unnested digital preconditioning. Thus, at the
level of FGMRES iteration counts, the nested analog preconditioner retains
nearly the same convergence behavior as the corresponding digital block Jacobi
preconditioners.

\begin{table}[t]
\centering
\caption{FGMRES iteration counts for the larger Poisson experiments. The nested analog preconditioner uses MCAI sub-blocks with \(n_c=\ell=64\), damping
\(\omega=0.05\), and \(p=q=8\).}
\label{tab:largepoisson}
\small
\begin{tabular}{p{0.48\linewidth}|cc|cc}
\textbf{Preconditioner}
& \multicolumn{2}{c|}{\textbf{3D Poisson}}
& \multicolumn{2}{c}{\textbf{2D Poisson}} \\
\cline{2-5}
& mean & std. dev. & mean & std. dev. \\ \hline
Unnested, digital, \(p=8\)
& 38.3 & 1.0 & 92.4 & 1.7 \\ 
Nested, digital, \(p=q=8\)
& 38.6 & 0.7 & 94.0 & 0.8 \\ 
Nested, analog, MCAI, \(\omega=0.05\), \(p=q=8\)
& 38.4 & 0.9 & 93.8 & 1.0 \\ \hline
\end{tabular}
\end{table}

\section{Conclusions}
\label{sec:conc}

We studied block Jacobi approximate inverse preconditioning for FGMRES in a
hybrid digital--analog setting in which preconditioner construction and
Krylov iterations are performed digitally, whereas preconditioner applications
are executed on the analog device.
We developed various block Jacobi preconditioning schemes including
exact block inverses, sparse approximate inverses, and 
MCAI preconditioners.
The results show that analog preconditioner
design must balance exact-arithmetic inverse quality against robustness to
hardware-induced perturbations. In particular, stronger local inverses or
larger blocks are not always preferable once analog noise, quantization,
clipping, and dynamic range effects are taken into account. 
The analog-aware
viewpoint developed here motivates noise-matched MCAI construction, modest damping, and nested block solves. The numerical results
indicate that these choices can make analog preconditioner applications
closely track their digital counterparts in iteration count for the model
problems considered.

Future work includes overlapping block Jacobi variants,
systematic robustness studies across hardware noise levels, and extensions to
linear systems with multiple right-hand sides, where the cost of programming an
analog preconditioner could be amortized over many solves. Another direction is to study low-rank-based preconditioning within analog
computing architectures, including hierarchical matrix representations
\cite{smash,H1999,xu2025neuralapproximateinversepreconditioners} and
multilevel low-rank correction schemes \cite{mslr,gmslr}.

\bibliographystyle{siamplain}
\bibliography{references}

\end{document}

%% file: shared.tex
\usepackage{lipsum}
\usepackage{amsfonts}
\usepackage{graphicx}
\usepackage{epstopdf}
\usepackage{algorithmic}
\usepackage{color}
\ifpdf
  \DeclareGraphicsExtensions{.eps,.pdf,.png,.jpg}
\else
  \DeclareGraphicsExtensions{.eps}
\fi
\definecolor{shikharnotecolor}{HTML}{7703FC}

\newsiamremark{remark}{Remark}
\newsiamremark{hypothesis}{Hypothesis}
\crefname{hypothesis}{Hypothesis}{Hypotheses}
\newsiamthm{claim}{Claim}
\newsiamremark{fact}{Fact}
\crefname{fact}{Fact}{Facts}

\headers{Hybrid Digital--Analog Preconditioning}{S. Shah, R. Li, T. Gokmen, V. Kalantzis, L. Horesh, and Y. Xi}

\title{Hybrid Digital--Analog Approximate Inverse Preconditioning for Krylov Methods}

\author{
Shikhar Shah\thanks{Department of Mathematics, Emory University, Atlanta, GA, USA
(\email{shikhar.shah@emory.edu}, \email{yxi26@emory.edu})
The work of S. Shah, R. Li, and Y. Xi was performed under the auspices of
the U.S. Department of Energy by Lawrence Livermore National Laboratory under
Contract DE-AC52-07NA27344.}
\and
Rui Peng Li\thanks{Center for Applied Scientific Computing, Lawrence Livermore National
Laboratory, Livermore, CA, USA
(\email{li50@llnl.gov}).}
\and
Tayfun Gokmen\thanks{Thomas J. Watson Research Center, IBM, Yorktown Heights, NY, USA
(\email{tgokmen@us.ibm.com}, \email{vkal@ibm.com},
\email{lhoresh@us.ibm.com})}
\and
Vassilis Kalantzis\footnotemark[3]
\and
Lior Horesh\footnotemark[3]
\and
Yuanzhe Xi\footnotemark[1]
}
\usepackage{amsopn}

\usepackage{xcolor}